\tikzset{every picture/.style={scale=0.5}}
\DeclareMathOperator{\supp}{supp}
\DeclareMathOperator{\ord}{ord}
\newcommand{\bra}[1]{\langle#1\rangle}
\newcommand{\bras}[1]{\langle#1\rangle_s}
\newcommand{\G}[1]{\mathbb Z_{#1}\times\mathbb Z_{#1}}
\newcommand{\EF}[1]{\widehat{\mathbf 1}_{#1}}
\newcommand{\F}[1]{\EF{#1}^{sym}}
\newcommand{\EZ}[1]{Z(\EF{#1})}
\newcommand{\Z}[1]{Z(\F{#1})}
\newtheorem{Thm}{Theorem}
\newtheorem*{Thm*}{Theorem}
\newtheorem{Lm}{Lemma}
\newtheorem*{Lm*}{Lemma}
\newtheorem{Cor}{Corollary}
\newtheorem{Prop}{Proposition}
\title{On Tiling and Spectral Sets in $\mathbb Z_{p^2}\times\mathbb Z_{p^2}$}
\author{Weiqi Zhou\thanks{zwq@xzit.edu.cn}}
\affil{\small School of Mathematics and Statistics, Xuzhou University of Technology \\  {\footnotesize Lishui Road 2, Yunlong District, Xuzhou, Jiangsu Province, China 221111}}
\date{}							
\begin{document}
\maketitle
\begin{abstract}
Let $p$ be a prime number, it is shown that tiling and spectral sets coincide in $\mathbb Z_{p^2}\times\mathbb Z_{p^2}$ by considering equivalently symplectic spectral pairs. The main approach is still to analyze the zero set of the Fourier transform. The zero set of the symplectic Fourier transform differs from the zero set of the usual Fourier transform by an orthogonal rotation, but using the symplectic Fourier transform allows more freedom when applying change of basis. Some auxiliary results concerning tiling sets and spectral sets of sizes $p$ and $p^{2m-1}$ in $\mathbb Z_{p^m}\times\mathbb Z_{p^m}$ are also presented. \\

{\noindent
{\bf Keywords}:  Fuglede's conjecture; tiling sets; symplectic Fourier transform. \\[1ex]
{\bf 2020 MSC}: 42A99; 05B45}
\end{abstract}
 
\section{Introduction} 
Let $A,B$ be subsets of an additive finite Abelian group $G$, we use $A+B$ for the multiset (a \emph{multiset} is a collection of elements in which elements are allowed to be repeated) formed by elements of form $a+b$ where $a,b$ are enumerated from $A,B$ respectively. We write $A\oplus B$ if $A+B$ is actually a usual set (i.e., no elements appears more than once), and in this case we may also say that $A,B$ are tiling complements of each other in $A\oplus B$. A subset $A$ is said to be a \emph{tiling set} (or \emph{a tile}) in $G$, if there exists another subset $B$ of $G$ so that $A\oplus B=G$, in which case we shall call $(A,B)$ a tiling pair in $G$.

Let $\mathbb Z_n=\{0,1,\ldots,n-1\}$ be the additive cyclic group of $n$ elements, a subset $A$ is said to be \emph{spectral} in $\mathbb Z_n\times\mathbb Z_n$, if there is some $S\subseteq\G{n}$ such that $\{e^{2\pi i\bra{x,s}/n}\}_{s\in S}$ (if $x=(x_1,x_2)$ and $s=(s_1,s_2)$, then $\bra{x,s}=x_1s_1+x_2s_2$ is the inner product) is an orthogonal bases on $L^2(A)$ with respect to the counting measure. In such case we shall call $(A,S)$ a \emph{spectral pair} in $\G{n}$, and $S$ is said to be a \emph{spectrum} of $A$.

A fundamental question of the sampling theory, known as the Fuglede's conjecture \cite{fuglede1974}, asks whether these two types of sets coincide. The conjecture has been disproved in $\mathbb R^d$ for $d\ge 3$ \cite{farkas2006, matolcsi2006, kolounzakis2006, matolcsi2005, tao2004}, while for $d=1,2$ it remains open (see \cite{lev2022} for positive results on convex sets in all dimensions). The same conjecture in finite Abelian groups is equally important and partially equivalent to the conjecture in $\mathbb R^d$ \cite{dutkay2014, wang1996, iosevich2013}. On $p$-adic fields it is known to be true \cite{fan2016,fan2019}. 

So far there are quite a number of positive results in cyclic groups, in which the method evolves: Lagarias and Wang showed that it is true in periodic cyclic groups (i.e., if $(A,B)$ is a tiling pair in such a group, then one of $A,B$ has to be periodic) \cite{wang1997}, which includes all cyclic $p$-groups. Instead of looking at periodic components or other weakened versions of periodicity, Coven and Meyerowitz proposed two different conditions to characterize tiles in cyclic groups $\mathbb Z_n$, and they proved its sufficiency for all $n$ and necessity when $n$ is the product of two prime powers \cite{coven1999}. \L{}aba showed that a set satisfying these conditions will also admit a spectrum \cite{laba2002}. Therefore to show that a set is tiling/spectral in cyclic groups, we may verify whether they satisfy the Coven-Meyerowitz condition, this approach has been fruitful \cite{malikiosis2020, laba2022, laba2024, malikiosis2022, somlai2019, zhang2024}, and has led to several positive results in $\mathbb Z_n$ including $n$ being square-free or product of two prime powers. 

For groups generated by two elements the number of results (all positive) are limited \cite{fallon2022, iosevich2017, shi2020, zhang2021}. Although the conjecture is false for $d\ge 3$, in \cite{shi2020} it is asked whether all tiles are spectral in $\mathbb Z_{p^n}^d$ ($p^n$ is a prime power), since even in $p$-groups so far there has been no non-spectral tile found, and the question is answered positively at least for certain cases in higher dimensions \cite{fan2024, malikiosis2024}. The tiling to spectral part of this paper is a critical case left open in \cite{fan2024}.

There are three auxiliary results (Lemma \ref{LmTSp}, Lemma \ref{LmTSq}, Lemma \ref{LmSTq} respectively) and one main result (Theorem \ref{ThmMain})  in this paper ($m\in\mathbb N$ below):

\begin{Lm*} 
If $A$ is a tile in $\G{p^m}$ with $|A|=p$, then $A$ is spectral.
\end{Lm*}

\begin{Lm*} 
If $A$ is a tile in $\G{p^m}$ with $|A|=p^{2m-1}$, then $A$ is spectral.
\end{Lm*}

\begin{Lm*}
If $A$ is a symplectic spectral set in $\G{p^m}$ with $p^{2m}>|A|\ge p^{2m-1}$, then $A$ must be a tile with $|A|=p^{2m-1}$.
\end{Lm*}

\begin{Thm*}
Let $p$ be a prime number, then any subset $A\subseteq\G{p^2}$ is spectral if and only if it is a tile.
\end{Thm*}

The main approach is by considering equivalently symplectic spectral pairs (which allows more freedom when applying change of basis), and to analyze the difference set, the structure of the zero set of the Fourier transform, and in particular how these two types of sets are distributed in different equivalence classes of subgroup generators respectively. 

\section{Preliminaries}
\subsection{The Euclidean setting} 
Let $A$ be a subset of a finite Abelian group $G$, define its \emph{difference set} to be
$$\Delta A=\{a-a': a,a'\in A, a\neq a'\}.$$

\begin{Lm} \label{LmDelta}
Suppose $A,B$ are subsets of some finite Abelian group $G$, if
$$|A|\cdot |B|\ge|G|, \quad\text{and}\quad \Delta A\cap \Delta B=\emptyset,$$
then we must have $|A|\cdot |B|=|G|$ and that $(A,B)$ is a tiling pair in $G$.
\end{Lm}

\begin{proof}
Suppose $a,a'\in A, b,b'\in B$, then
$$a+b=a'+b' \quad\Leftrightarrow\quad a-a'=b-b',$$
indicates
\begin{equation} \label{EqDiff}
A\oplus B \text{ is well defined} \quad\Leftrightarrow\quad \Delta A\cap \Delta B=\emptyset.
\end{equation}
Clearly $A\oplus B$ is a subset of $G$, thus 
$$|A\oplus B|\le |G|.$$
On the other hand by the given assumption we also have
$$|A\oplus B|=|A|\cdot|B|\ge|G|.$$
Together this means
$$|A\oplus B|=|G|,$$
which indicates that $(A,B)$ is a tiling pair in $G$, therefore $|A|\cdot |B|=|G|$ must also hold.
\end{proof}

Let $f$ be a function on $\G{n}$, denote by  
$$\supp(f)=\{x\in\G{n}: f(x)\neq 0\}, \quad Z(f)=\{x\in\G{n}: f(x)=0\},$$
its support set and its zero set respectively. Its \emph{Fourier transform} is defined as
$$\widehat{f}(\xi)=\sum_{x\in\G{n}}f(x)e^{2\pi i\bra{x,\xi}/n},\quad \xi\in\G{n}.$$
In particular, if $f=\mathbf 1_A$ is the characteristic function for some $A\subset\G{n}$, then we get
\begin{equation} \label{EqDefF}
\EF{A}(\xi)=\sum_{a\in A}e^{2\pi i\bra{a,\xi}/n}.
\end{equation}
From this definition we can verify directly that
\begin{equation} \label{EqConv}
\EF{A+B}=\EF{A}\cdot\EF{B}.
\end{equation}
It is thus clear that $(A,B)$ being a tiling pair in $\G{n}$ if and only if
$$\EF{A}\cdot\EF{B}=\EF{\G{n}}.$$
It is also easy to verify that 
$$\EZ{\G{n}}=(\G{n})\setminus\{(0,0)\},$$
therefore $(A,B)$ being a tiling pair in $\G{n}$ is also equivalent to 
$$|A|\cdot|B|=|\G{n}| \quad\text{and}\quad \Delta(\G{n})=(\G{n})\setminus\{(0,0)\}\subseteq Z(\EF{A}\cdot\EF{B}).$$
Similarly $(A,S)$ being a spectral pair is equivalent to
$$|A|=|S| \quad\text{and}\quad \Delta S\subseteq \EZ{A}.$$

Denote by $\ord(\xi)$ the oder of an element $\xi\in\G{n}$. Given $A,\xi$, to understand whether $\xi\in \EZ{A}$, we first notice that there must be a unique $h\in\G{n}$ so that $\xi=mh$ with $m=n/\ord(\xi)$, then we may define the \emph{mask polynomial} of $A,\xi$ to be
$$P_{A,\xi}(z)=\sum_{a\in A}z^{\bra{a,h}}, \quad\text{where}\quad \xi=\frac{n}{\ord(\xi)}h.$$
Comparing it with \eqref{EqDefF} we see that
$$\EF{A}(\xi)=\sum_{a\in A}e^{2\pi i\bra{a,\xi}/n}=\sum_{a\in A}e^{2\pi i\bra{a,mh}/n}=\sum_{a\in A}e^{2\pi i\bra{a,h}/\ord(\xi)}=P_{A,\xi}(e^{2\pi i/\ord(\xi)}).$$ 
Therefore the Fourier transform $\EF{A}$ evaluated at $\xi$ is at the same time the evaluation of the mask polynomial $P_{A,\xi}(z)$ at the $\ord(\xi)$-th root of unity $z=e^{2\pi i/\ord(\xi)}$. 

For each $n\in\mathbb N$, denote the $n$-th cyclotomic polynomial by $\Phi_n(z)$. Clearly $P_{A,\xi}(z)\in \mathbb N[z]$, therefore whether $\xi\in \EZ{A}$ holds depends on the divisibility of $P_{A,\xi}(z)$ by the $\ord(\xi)$-th cyclotomic polynomial. In particular, this also means that if $L$ is a linear transformation on $\G{n}$ that satisfies 
$$\bra{Lx,Ly}=d\bra{x,y},$$
(i.e., a ``unitary matrix'' scaled by $d$) then
$$\EF{LA}(L\xi)=\sum_{a\in A}e^{2\pi i\bra{La,L\xi}/n}=\sum_{a\in A}e^{2\pi i d\bra{a,\xi}/n}.$$
If $d$ is coprime to $n$, then it is also coprime to $\ord(\xi)$, thus $e^{2\pi id/\ord(\xi)}$ is still a primitive $\ord(\xi)$-th root of unity, and $z=e^{2\pi id/\ord(\xi)}$ is a root of $P_{LA,L\xi}(z)$ if and only if $z=e^{2\pi i/\ord(\xi)}$ is a root of $P_{A,\xi}(z)$. Therefore to determine whether $A$ is tiling/spectral, we can equivalently investigate whether $LA$ is tiling/spectral.

This observation allows us to apply ``change of basis'' in subsequent analysis, as we shall see later it is often needed to map a pair of generators $h,h'$ of $\G{n}$ to the ``standard basis'' $(1,0)$ and $(0,1)$. Unfortunately if $n$ is a composite number, then $\G{n}$ is not a vector space, which means an $L$ that satisfies
\begin{equation} \label{EqL}
\bra{Lx,Ly}=d\bra{x,y}, \quad \gcd(d,n)=1,
\end{equation}
($\gcd$ stands for greatest common divisor) and at the same time maps an arbitrary generating pair $h,h'$ to the standard basis may not exist. For example, $a=(1,1), b=(1,3)$ is a generating pair of $\G{4}$, they are orthogonal to each other since $\bra{a,b}=0\pmod 4$, but there is no $L$ that can map them to standard basis and at the same time keep \eqref{EqL}. Moreover, an ``orthogonal decomposition'' need not even exist in $\G{n}$, for example, consider the subgroup generated by $(1,18)$ in $\G{25}$, it is a maximal cyclic subgroup, we may also verify that
$$(-18,1)=7\cdot(1,18) \pmod{25}.$$ 
Therefore this group is ``orthogonal'' to itself, and an exhaustive search will show that it has no orthogonal complement (i.e., no other maximal cyclic subgroup is orthogonal to it). The same can happen in $\G{p^m}$ when $p \bmod 4=1$, because in such case $x^2=-1$ is solvable in the multiplicative group modulo $p^m$, and if $k^2=-1 \pmod{p^m}$, and $k^{-1}$ is the multiplicative inverse of $k$ modulo $p^m$, then we will have $(-k,1)=k^{-1}\cdot(1,k) \pmod{p^m}$.

None of these issues will exist if we switch to the symplectic setting,  we will be allowed to freely change basis in the symplectic setting.  

\subsection{The Symplectic setting} 
Given $x=(x_1,x_2)$ and $y=(y_1,y_2)$, define the \emph{symplectic form} between $x,y$ to be
$$\bras{x,y}=x_1y_2-y_1x_2.$$
The symplectic form enters Fourier analysis in the way that it captures commutativity of time-frequency shifts, i.e., if $T_t:f(x)\mapsto f(x-t)$ and $M_{\xi}: f(x)\mapsto e^{2\pi i\xi\cdot x}f(x)$ are respectively the translation operator and the modulation operator on $L^2(\mathbb R)$, then
$$M_aT_bM_{a'}T_{b'}=e^{2\pi i(ab'-a'b)}M_{a'}T_{b'}M_{a}T_{b},$$
holds for any $a,b,a',b'\in\mathbb R$. The term $ab'-a'b$ in the exponent is a symplectic form.

If $h=(h_1,h_2),h'=(h_1',h_2')\in\G{n}$ satisfies $\bras{h,h'}=1$, then they are called a pair of \emph{symplectic basis} on $\G{n}$. The relation 
$$\bras{h,h'}=h_1h_2'-h_1'h_2=\det\begin{pmatrix}h_1 & h_1' \\ h_2 & h_2'\end{pmatrix}$$
indicates that in such case $h,h'$ generate $\G{n}$. The following lemma is essentially \cite[Lemma 2.3]{zhou2024}.

\begin{Lm} \label{LmBasis}
If $H,H'$ are two subgroups in $\G{n}$ with $|H|=|H'|=n$ and $\G{n}=H\oplus H'$, then for each generator $h$ of $H$, there is some generator $h'$ of $H'$ such that $h,h'$ form a symplectic basis.
\end{Lm}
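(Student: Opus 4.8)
The plan is to study the group homomorphism $\psi\colon\G\to\mathbb Z_n$ given by $\psi(y)=\bras{h,y}$ and to show that its restriction to $H'$ is already surjective; any element of $H'$ sent to $1$ is then the desired $h'$.

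First I would observe that since $|H|=n$ and $h$ generates $H$, the element $h=(h_1,h_2)$ has order exactly $n$ in $\G$, which is equivalent to $\gcd(h_1,h_2,n)=1$. By B\'ezout's identity there are integers $a,b$ with $ah_1+bh_2\equiv 1\pmod n$, so $\psi(-b,a)=h_1a-h_2(-b)\equiv 1\pmod n$; hence $\psi$ is surjective onto $\mathbb Z_n$ (this is just non-degeneracy of the symplectic form at a primitive vector). Next, because the symplectic form is alternating, $\psi$ vanishes identically on $H=\bra{h}$: for $y=kh$ we get $\psi(kh)=k\bras{h,h}=0$. Finally I would invoke the decomposition $\G=H\oplus H'$: as $\psi$ is additive and $\G=H+H'$ as a set, its image is $\psi(\G)=\psi(H)+\psi(H')=\{0\}+\psi(H')=\psi(H')$, so $\psi(H')=\mathbb Z_n$. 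Choosing $h'\in H'$ with $\bras{h,h'}=\psi(h')=1$ exhibits a symplectic basis $(h,h')$ by definition, which finishes the proof.

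I do not expect a serious obstacle here; the only step needing a moment's care is the initial reduction that a generator of an order-$n$ subgroup must be a primitive vector (order $n$), which is exactly what forces $\psi$ to be onto all of $\G$ — and when $H$ fails to be cyclic the statement is vacuous, consistent with the hypothesis. An alternative route is to note that the symplectic annihilator of $H'$ is a subgroup of order $n$ containing $h$, hence equal to $H$, and to read off surjectivity of $\psi|_{H'}$ from that; but the direct computation above is shorter and avoids any counting.
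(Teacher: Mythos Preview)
Your main argument is correct: the map $\psi(y)=\bras{h,y}$ is a homomorphism onto $\mathbb Z_n$ because $\gcd(h_1,h_2,n)=1$ (equivalent to $\ord(h)=n$), it vanishes on $H=\bra{h}$, and the decomposition $\G=H\oplus H'$ then forces $\psi(H')=\mathbb Z_n$. Picking a preimage of $1$ in $H'$ finishes it. The paper does not supply its own argument for this lemma but refers to \cite[Lemma~2.3]{zhou2024}, so a direct comparison of approaches is not possible from the text; your self-contained proof via B\'ezout and the kernel/image bookkeeping is entirely adequate.

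One small remark on your closing paragraph: the ``alternative route'' as you phrased it is not quite right. The symplectic annihilator $(H')^{\perp_s}$ has order $n$, but there is no a priori reason it should contain $h$; indeed, $h\in(H')^{\perp_s}$ would mean $\psi|_{H'}\equiv 0$, the opposite of what you want. What does work along those lines is to observe that $\ker\psi=\bra{h}^{\perp_s}$ has order $n$ (once $\psi$ is known to be surjective on $\G$) and contains $H$, hence equals $H$; then $\ker(\psi|_{H'})=H\cap H'=\{0\}$ gives injectivity and therefore surjectivity of $\psi|_{H'}$. This is just a reorganization of your main argument rather than an independent one, so the aside can safely be dropped.
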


Define the \emph{symplectic orthogonal set} of a given $H\subseteq\G{n}$ to be
$$H^{\perp_s}=\{x\in\mathbb Z_n\times\mathbb Z_n: \bras{x,h}=0, \forall h\in H\}.$$
The following lemma can be found in \cite[Lemma 2.2]{zhou2024}, $H$ is often called a \emph{Lagrangian} if $H=H^{\perp_s}$. 

\begin{Lm} \label{LmLag}
Let $H\subseteq\G{n}$, $H=H^{\perp_s}$ if and only if $H$ is an order $n$ subgroup.  
\end{Lm}

From the proof of \cite[Lemma 2.2]{zhou2024} one also sees that if $H$ is a subgroup, then
\begin{equation} \label{EqHHs}
|H|\cdot |H^{\perp_s}|=n^2.
\end{equation}

Now define the \emph{symplectic Fourier transform} of a function $f$ on $\G{n}$ to be
$$\widehat{f}^{sym}(\xi)=\sum_{x\in\G{n}}f(x)e^{2\pi i\bras{x,\xi}/n}, \quad \xi\in\G{n}.$$
Similarly if $f=\mathbf 1_A$, then we have
$$\F{A}(\xi)=\sum_{a\in A}e^{2\pi i\bras{a,\xi}/n}.$$
The \emph{symplectic mask polynomial} of given $A$ and given $\xi=(n/\ord(\xi))\cdot h$ is accordingly
$$P_{A,\xi}^{sym}(z)=\sum_{a\in A}z^{\bras{a,h}},$$
similarly $\F{A}$ is still the evaluation of $P_{A,\xi}^{sym}(z)$ at the $\ord(\xi)$-th root of unity $z=e^{2\pi i/\ord(\xi)}$:
\begin{equation} \label{EqMask}
\F{A}(\xi)=P_{A,\xi}^{sym}(e^{2\pi i/\ord(\xi)}).
\end{equation}

We may also define a \emph{symplectic spectral set} to be a subset $A\subseteq\G{n}$ for which there exists some $S\subseteq\G{n}$ such that $\{e^{2\pi i\bras{a,s}/n}\}_{s\in S}$ forms an orthogonal basis on $L^2(A)$ with respect to the counting measure, $S$ is then called a \emph{symplectic spectrum} of $A$ and $(A,S)$ is said to be a \emph{symplectic spectral pair}. Similar as in the Euclidean setting it is easy to see that $(A,B)$ being a tiling pair is equivalent to
\begin{equation} \label{EqDecomp}
|A|\cdot|B|=|\G{n}| \quad\text{and}\quad \Delta(\G{n}) \subseteq Z(\F{A}\cdot \F{B}),
\end{equation}
while $(A,S)$ is a symplectic spectral pair in $\G{n}$ if and only if
\begin{equation} \label{EqAnn}
|A|=|S| \quad\text{and}\quad \Delta S\subseteq\Z{A}.
\end{equation}

\begin{Lm} \label{LmSEquiv}
Let $A\subseteq\G{n}$, then $A$ is a spectral set if and only if it is also a symplectic spectral set.
\end{Lm}

\begin{proof}
It is easy to see that $(s_1,s_2)\in\Z{A}$ if and only if $(-s_2,s_1)\in\Z{A}$, therefore the symplectic spectrum and the spectrum differ only by a $90$ degree rotation, which is a bijection.
\end{proof}

A \emph{symplectic transformation} on $\G{n}$ is a linear transformation $L$ such that
\begin{equation} \label{EqLs}
\bras{Lx,Ly}=\bras{x,y}, \quad \forall x,y\in\G{n},
\end{equation}
holds. On $\G{n}$ it is just a $2\times 2$ matrix with unit determinant modulo $n$. Indeed, let $L=\begin{pmatrix}a & b \\ c & d\end{pmatrix}$ and $S=\begin{pmatrix}0 & 1 \\ -1 & 0\end{pmatrix}$, then $\bras{x,y}=\bra{x,Sy}$ implies that \eqref{EqLs} holds if and only if $L^*SL=S$ ($L^*$ is the adjoint of $L$), i.e.,
$$\begin{pmatrix}a & c \\ b & d\end{pmatrix}\begin{pmatrix}0 & 1 \\ -1 & 0\end{pmatrix}\begin{pmatrix}a & b \\ c & d\end{pmatrix}=\begin{pmatrix}0 & 1 \\ -1 & 0\end{pmatrix}.$$ 
Straightforward computation shows
$$\begin{pmatrix}a & c \\ b & d\end{pmatrix}\begin{pmatrix}0 & 1 \\ -1 & 0\end{pmatrix}\begin{pmatrix}a & b \\ c & d\end{pmatrix}=\begin{pmatrix}0 & ad-bc \\ bc-ad & 0\end{pmatrix},$$
which means $L$ is a symplectic transformation if and only if $ad-bc=1 \pmod n$.

\begin{Lm} \label{LmTrans}
If $x,y$ is a symplectic basis, then there is a symplectic transformation that maps $x$ to $(1,0)$ and $y$ to $(0,1)$. 
\end{Lm}

\begin{proof}
Suppose $x=(x_1,x_2), y=(y_1,y_2)$, then $\begin{pmatrix}x_1 & y_1 \\ x_2 & y_2\end{pmatrix}$ maps $(1,0)$ to $x$ and $(0,1)$ to $y$, hence its inverse $\begin{pmatrix}y_2 & -y_1 \\ -x_2 & x_1\end{pmatrix}$ has the desired mapping property, it is a symplectic transformation since its determinant is $x_1y_2-x_2y_1=\bras{x,y}=1$.
\end{proof}

In the context of this paper \eqref{EqLs} can also be replaced with $\bras{Lx,Ly}=d\bras{x,y}$ for any $d$ that is coprime to $n$. Similarly $\bras{x,y}=d$ for $d$ coprime to $n$ suffices to make $x,y$ a generating pair of $\G{n}$. We choose to set $d=1$ in both cases to be consistent with the literature and for expository purpose. This choice is harmless since if $x,y$ is a generating pair, then Lemma \ref{LmBasis} shows that it is always possible to extract a symplectic basis in the respective maximal cyclic subgroups they generate. Lemma \ref{LmGen} below is \cite[Lemma 2.4]{zhou2024}.

\begin{Lm} \label{LmGen}
Let $A\subseteq\G{n}$ and $h,h'\in\G{n}$, if $h$ and $h'$ generate the same cyclic subgroup, then $h\in\Z{A}$ implies $h'\in\Z{A}$.
\end{Lm}

\begin{Lm} \label{LmZ}
Let $H$ be a subgroup of $\G{n}$, then 
\begin{enumerate}[leftmargin=*, label=(\roman*)]
\item \label{Z1}
$$\F{H}=|H|\cdot \mathbf 1_{H^{\perp_s}}.$$
\item \label{Z2} $A\subseteq\G{n}$ complements $H$ if and only if  
$$|A|\cdot |H|=n^2 \quad\text{and}\quad \Delta H^{\perp_s}\subseteq\Z{A}.$$
\end{enumerate} 
\end{Lm}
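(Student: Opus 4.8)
The plan is to prove part \ref{Z1} by a one-line character-sum computation, and then obtain part \ref{Z2} by feeding that formula into the tiling criterion \eqref{EqDecomp} already recorded in the preliminaries.

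For part \ref{Z1}, I would fix $\xi\in\G$ and note that, since the symplectic form is bilinear, the map $h\mapsto\bras{h,\xi}$ is a group homomorphism from $H$ into $\mathbb Z_n$; hence $h\mapsto e^{2\pi i\bras{h,\xi}/n}$ is a character of $H$, and $\widehat{\mathbf 1}_H^{sym}(\xi)$ is precisely the sum of this character over $H$. Invoking the elementary orthogonality relation — a character summed over a finite group equals the group order when the character is trivial and $0$ otherwise — the sum is $|H|$ exactly when $\bras{h,\xi}\equiv 0$ for every $h\in H$, i.e.\ exactly when $\xi\in H^{\perp_s}$, and is $0$ for every other $\xi$. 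This is the asserted identity $Z(\widehat{\mathbf 1}_H^{sym})=(\G)\setminus H^{\perp_s}$.

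For part \ref{Z2}, I would apply the tiling characterization \eqref{EqDecomp} with $B=H$: the set $A$ complements $H$ if and only if $|A|\cdot|H|=n^2=|\G|$ together with $\Delta(\G)\subseteq Z(\widehat{\mathbf 1}_A^{sym})\cup Z(\widehat{\mathbf 1}_H^{sym})$. Substituting part \ref{Z1}, the inclusion says that every nonzero $g\in\G$ lies in $Z(\widehat{\mathbf 1}_A^{sym})$ or outside $H^{\perp_s}$, which by contraposition is equivalent to: every nonzero element of the subgroup $H^{\perp_s}$ lies in $Z(\widehat{\mathbf 1}_A^{sym})$. Since $H^{\perp_s}$ is a subgroup (an intersection of kernels of the additive maps $\bras{\cdot,h}$), its difference set satisfies $\Delta H^{\perp_s}=H^{\perp_s}\setminus\{(0,0)\}$, so the condition is exactly $\Delta H^{\perp_s}\subseteq Z(\widehat{\mathbf 1}_A^{sym})$, which finishes the proof.

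I do not anticipate a genuine obstacle: the only ingredients are character orthogonality, the already-stated equivalence \eqref{EqDecomp}, and the observation that a subgroup's difference set is the subgroup with the identity deleted. The one point requiring a little care is the bookkeeping in part \ref{Z2} — tracking that ``$g\notin H^{\perp_s}$ or $g\in Z$ for all nonzero $g$'' is the same as ``$g\in Z$ for all nonzero $g\in H^{\perp_s}$'' — and matching the size conditions via $|\G|=n^2$.
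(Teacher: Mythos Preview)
Your proof is correct and matches the paper's approach: the paper does not actually supply its own proof of this lemma but defers to \cite{zhou2024}, while remarking that part \ref{Z1} ``is actually the Poisson summation formula'' $\widehat{\mathbf 1}_H^{sym}=|H|\cdot\mathbf 1_{H^{\perp_s}}$, which is exactly your character-orthogonality computation. Your deduction of part \ref{Z2} from \eqref{EqDecomp} and part \ref{Z1} is the natural one and there are no gaps.
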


Lemma \ref{LmZ} \ref{Z1} \ref{Z2} are essentially \cite[Lemma 2.5, Lemma 2.6]{zhou2024} respectively. In particular, Lemma \ref{LmZ} \ref{Z1} can be viewed as the counterpart of the Poisson summation formula. 

From now on we use the notation $\bra{x}$ for the subgroup generated by $x$ in $\G{n}$. 
\begin{Lm}[counting] \label{LmCount}
Let $p$ be a prime number, $x\in\G{n}$ with $\ord(x)=p^m$ ($m\in\mathbb N$) and $A\subseteq\G{n}$, if $x\in\Z{A}$, then 
\begin{equation} \label{EqCount}
|A\cap\bra{px}^{\perp_s}|=|A\cap\bra{x}^{\perp_s}|\cdot p.
\end{equation}
Moreover, if 
$$V_k=\{y\in\G{n}: \bras{x,y}=k\cdot \frac{n}{p} \pmod n\},$$
then we have
\begin{equation} \label{EqEqui}
|A\cap V_0|=|A\cap V_1|=\ldots=|A\cap V_{p-1}|.
\end{equation}
\end{Lm}

\begin{proof}
Without loss of generality (by using a symplectic transformation) let us assume that $x=(0,n/p^m)$, then it is straightforward to verify that
$$\bra{x}^{\perp_s}=\{0,p^m,2p^m,\ldots,(n/p^m-1)p^m\}\times\mathbb Z_n.$$
and 
$$\bra{px}^{\perp_s}=\{0,p^{m-1},2p^{m-1},\ldots,(n/p^{m-1}-1)p^{m-1}\}\times\mathbb Z_n.$$

Setting $\omega=e^{2\pi i/p^m}$, by \eqref{EqMask} we get
\begin{equation} \label{EqCountd3}
\F{A}(x)=P_{A,x}^{sym}(\omega)=\sum_{(a_1,a_2)\in A}\omega^{a_1},
\end{equation}
then $\F{A}(x)=0$ implies that the symplectic mask polynomial is divisible by the $p^m$-th cyclotomic polynomial $1+z^{p^{m-1}}+\ldots+z^{(p-1)p^{m-1}}$, therefore we get 
$$\sum_{(a_1,a_2)\in A}\omega^{a_1}=\left(\sum_{j=0}^{p-1}\omega^{j\cdot p^{m-1}}\right) \cdot \left(\sum_{k=0}^{p^{m-1}-1}c_k\omega^k\right),$$
with $c_k\in\mathbb Z$. Expand it further we obtain
\begin{equation} \label{EqExpand}
\left(\sum_{j=0}^{p-1}\omega^{j\cdot p^{m-1}}\right) \cdot \left(\sum_{k=0}^{p^{m-1}-1}c_k\omega^k\right)=c_0\left(1+\omega^{p^{m-1}}+\ldots+\omega^{(p-1)p^{m-1}}\right)+g(\omega),
\end{equation}
where
$$g(\omega)=\left(\sum_{j=0}^{p-1}\omega^{jp^{m-1}}\right) \cdot \left(\sum_{k=1}^{p^{m-1}-1}c_k\omega_k\right),$$
collects all terms of form $c_k\omega^k$ with the exponent $k$ being a number that is not divisible by $p^{m-1}$. 

Comparing the part $c_0(1+\omega^{p^{m-1}}+\ldots+\omega^{(p-1)p^{m-1}})$ in \eqref{EqExpand} with \eqref{EqCountd3} we see that the terms $c_0,c_0\omega^{p^{m-1}}, \ldots, c_0\omega^{(p-1)p^{m-1}}$ must be obtained from elements in $A$ whose first coordinate is divisible by $p^{m-1}$, i.e., elements in $\bra{px}^{\perp_s}$, therefore we get
$$\F{A\cap\bra{xp}^{\perp_s}}(x)=c_0\left(1+\omega^{p^{m-1}}+\ldots+\omega^{(p-1)p^{m-1}}\right).$$  
Counting the number of terms then shows that 
\begin{equation} \label{EqCountP}
|A\cap\bra{px}^{\perp_s}|=c_0p.
\end{equation}
By the same reason it is also clear that 
$$c_0=|A\cap\bra{x}^{\perp_s}|.$$
Plug it back into \eqref{EqCountP} produces \eqref{EqCount}. And \eqref{EqEqui} follows from noticing that
$$\F{A\cap V_k}(x)=c_0\omega^{kp^{m-1}},$$
which means $|A\cap V_k|=c_0$ holds for each $k\in\{0,1,\ldots,p-1\}$.
\end{proof}

\eqref{EqEqui} is an equi-distribution phenomenon described in \cite{shi2020} (and reference therein).

The following uncertainty principle \cite{donoho1989,smith1990} is well known
$$|\supp(f)|\cdot |\supp(\widehat f)|\ge n^2.$$
Now since the symplectic zero set is just a rotation of the Euclidean zero zero, this easily translates into:
\begin{equation} \label{EqUncertainty}
|\supp(f)|\cdot |\supp(\widehat f^{sym})|\ge n^2.
\end{equation}

\begin{Lm} \label{LmUncertainty}
Let $A,B$ be subsets of $\G{n}$, and $H$ a subgroup of $\G{n}$.
\begin{enumerate}[leftmargin=*, label=(\roman*)]
\item If $|A|<|H|$, then $\Delta H$ is not completely contained in $\Z{A}$. \label{U1}
\item If $\Delta H\subseteq Z(\F{A}\cdot\F{B})$, then $|A|\ge |H|/|B|$. \label{U2}
\end{enumerate}
\end{Lm}

\begin{proof}
\ref{U1} is clear since otherwise $(A,H)$ would be a spectral pair with $|A|<|H|$.

For \ref{U2}, from Lemma \ref{LmZ} \ref{Z1} we have
$$\Z{H^{\perp_s}}=\G{n}\setminus(H^{\perp_s})^{\perp_s}=\G{n}\setminus H.$$
Therefore if $\Delta H\subseteq Z(\F{A}\cdot \F{B})$, then we get
$$\supp(\F{A}\cdot \F{B}\cdot \F{H^{\perp_s}})=\{(0,0)\}.$$
Let $f=\mathbf 1_{A+B+H^{\perp_s}}$, by \eqref{EqConv} we have
$$\widehat f=\F{A}\cdot \F{B}\cdot \F{H^{\perp_s}}.$$
By \eqref{EqUncertainty} we see that
$$|A+B+H^{\perp_s}|=|\supp(f)|\ge \frac{n^2}{|\supp(\F{A}\cdot \F{B}\cdot \F{H^{\perp_s}})|}=n^2,$$
while if $|A|<|H|/|B|$, then by \eqref{EqHHs} we would have
$$|A+B+H^{\perp_s}|\le|A|\cdot |B|\cdot |H^{\perp_s}|<|H|\cdot |H^{\perp_s}|=n^2,$$
which is a contradiction.
\end{proof}

\section{Some results in $\G{p^m}$}
Consider the partition of $\G{p^m}$ induced by the equivalence relation:
\begin{equation} \label{EqEquiv}
\sim: h\sim h'\text{ if $h,h'$ generate the same cyclic subgroup in } \G{p^m}.
\end{equation}
In this section we will use $E_{(a,b)}$ for the equivalence class that contains $(a,b)$, and call it non-trivial if $(a,b)\neq(0,0)$. Notice that $E_{(a,b)}$ is exactly the set of generators of the cyclic subgroup generated by $(a,b)$.

Given $A\subseteq\G{p^m}$, by Lemma \ref{LmGen} we see that a non-trivial equivalence class is either completely contained in $\Z{A}$ or completely disjoint with it. Moreover, for each $E_{(a,b)}$ there is a \emph{derived set} $C_{(a,b)}$ defined as
$$C_{(a,b)}=\{(0,0),(a,b),2(a,b),\ldots,(p-1)(a,b)\},$$
that satifies
\begin{equation} \label{EqDerive}
|C_{(a,b)}|=p, \quad \Delta C_{(a,b)}\subseteq E_{(a,b)}.
\end{equation}
This is clear by considering the group isomorphism $(a,b)\mapsto 1$ from $\bra{(a,b)}$ to $\mathbb Z_{\ord((a,b))}$.

\begin{Lm} \label{LmBasic}
If $A$ is a tile in $\G{p^m}$ with $|A|>1$, then $\Z{A}\cap\Delta(\G{p^m})\neq\emptyset$. Alternatively if $A$ is a spectral set in $\G{p^m}$, then $p$ divides $|A|$.
\end{Lm}

\begin{proof}
By \eqref{EqDecomp}, if $\Z{A}\cap\Delta(\G{p^m})$ is empty, then $\Delta(\G{p^m})$ is contained in $\Z{B}$, which contradicts Lemma \ref{LmUncertainty} \ref{U1} since $|A|>1$ implies $|B|=|\G{p^m}|/|A|<|\G{p^m}|$. This establishes the first statement.

By Lemma \ref{LmSEquiv}, $A$ being spectral implies it is also symplectic spectral, then we can view $\F{A}(s)$ as its symplectic mask polynomial evaluated at $z=e^{2\pi i/\ord(s)}$. As it vanishes the polynomial must be divisible by $\Phi_{\ord(s)}(z)$, consequently $|A|=\F{A}(0)$ is divisible by $\Phi_{\ord(s)}(1)=p$ (since $\ord(s)$ is a power of $p$). This establishes the second statement.
\end{proof}

\begin{Lm} \label{LmTSp}
If $A$ is a tile in $\G{p^m}$ with $|A|=p$, then $A$ is spectral.
\end{Lm}

\begin{proof}
By Lemma \ref{LmBasic}, $\Z{A}$ is not empty, thus by Lemma \ref{LmGen} there is some non-trivial equivalence class $E_{(a,b)}$ that is completely contained in $\Z{A}$, and therefore by \eqref{EqDerive} we see that the derived set $C_{(a,b)}$ is a symplectic spectrum of $A$, which by Lemma \ref{LmSEquiv} further implies that $A$ is spectral.
\end{proof}

The cyclic case of the following dilation result is established in \cite[Theorem 1]{tijdeman1995}, the non-cyclic case is later proved in \cite[Proposition 3]{sands2000}.
\begin{Prop} \label{PropScale}
Let $G$ be an additive finite Abelian group, and $(A,B)$ a tiling pair in $G$. If $q\in\mathbb N$ is co-prime to $|A|$, then $(qA,B)$ is still a tiling pair in $G$.
\end{Prop}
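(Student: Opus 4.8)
The plan is first to reduce to the case where $q=p$ is prime, then to reduce the whole statement to one stability property of the zero set of $A$, and finally to call on cyclotomic-polynomial considerations. Since every prime factor of $q$ is again co-prime to $|A|$, and since once $pA\oplus B=G$ is established for a prime $p\nmid|A|$ one automatically has $|pA|=|G|/|B|=|A|$ — so the hypothesis is preserved and successive prime dilations compose to dilation by $q$ — factoring $q$ into primes and iterating leaves only the claim: \emph{if $p$ is prime with $p\nmid|A|$, then $pA\oplus B=G$}. Writing $\widehat{\mathbf 1}_A(\chi)=\sum_{a\in A}\chi(a)$ for a character $\chi$ of $G$, I will use that $(A,B)$ is a tiling pair iff $|A|\,|B|=|G|$ and $\widehat{\mathbf 1}_A(\chi)\widehat{\mathbf 1}_B(\chi)=0$ for every non-trivial $\chi$, and that $\sum_{a\in A}\chi(pa)=\widehat{\mathbf 1}_A(\chi^{p})$.

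The core of the argument will be the claim that $\widehat{\mathbf 1}_A(\chi)=0$ implies $\widehat{\mathbf 1}_A(\chi^{p})=0$ for every character $\chi$ of $G$ — that is, the zero set of $A$ is stable under $\chi\mapsto\chi^{p}$. Granting this, I would conclude as follows. Let $f\colon G\to\mathbb N$ be $f(g)=\#\{a\in A:pa=g\}$, which equals $\mathbf 1_{pA}$ exactly when multiplication by $p$ is injective on $A$. Then $\widehat f(\chi)=\widehat{\mathbf 1}_A(\chi^{p})$ vanishes wherever $\widehat{\mathbf 1}_A$ does, so $\widehat f\cdot\widehat{\mathbf 1}_B$ vanishes off the trivial character, whence $f*\mathbf 1_B$ is a constant function, necessarily the constant $|A|\,|B|/|G|=1$; thus $\sum_{b\in B}f(g-b)=1$ for all $g$, and as the summands are non-negative this forces $f\le1$. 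Hence $f=\mathbf 1_{pA}$, so $|pA|=|A|$ and $\mathbf 1_{pA}*\mathbf 1_B=\mathbf 1_G$, i.e.\ $pA\oplus B=G$.

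To prove the stability claim I would split on $\ord(\chi)$. If $p\nmid\ord(\chi)$, then $\zeta\mapsto\zeta^{p}$ is an automorphism of $\mathbb Q(\zeta_{\ord(\chi)})$, so $\widehat{\mathbf 1}_A(\chi)$ and $\widehat{\mathbf 1}_A(\chi^{p})$ are Galois conjugates and the implication is immediate — this is the mechanism of Lemma \ref{LmGen}. If $\ord(\chi)$ is a power of $p$, the implication is vacuous: as in Lemma \ref{LmBasic}, a vanishing $\widehat{\mathbf 1}_A(\chi)$ would be a polynomial in $\mathbb N[x]$ of value $|A|$ at $1$ divisible by $\Phi_{\ord(\chi)}(x)$, forcing $p=\Phi_{\ord(\chi)}(1)$ to divide $|A|$, against the hypothesis. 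This leaves the case $\ord(\chi)=p^{a}e$ with $a\ge1$ and $e>1$ co-prime to $p$: passing to the cyclic quotient of $G$ through which $\chi$ factors, $\widehat{\mathbf 1}_A(\chi)=0$ reads $\Phi_{p^{a}e}(x)\mid P(x)$ for the mask $P$ of $A$ there, and one must deduce $\Phi_{p^{a-1}e}(x)\mid P(x)$.

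I expect this last divisibility step to be the main obstacle. It is delicate precisely because the implication ``$\Phi_{p^{a}e}\mid P\Rightarrow\Phi_{p^{a-1}e}\mid P$'' is false for a generic non-negative $P$ with $p\nmid P(1)$ — for instance, with $p=2$, $P(x)=1+2x^{2}+x^{4}+x^{5}=\Phi_{6}(x)\,(x^{3}+2x^{2}+x+1)$ has $P(1)=5$ yet $\Phi_{3}(x)\nmid P(x)$ — so any proof must use more than $p\nmid|A|$: it must use that $A$ tiles $G$, i.e.\ bring in the complementary mask of $B$ via $\widehat{\mathbf 1}_A\cdot\widehat{\mathbf 1}_B=\widehat{\mathbf 1}_G$. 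This is exactly what is carried out in \cite{tijdeman1995} for cyclic $G$ and in \cite{sands2000} for arbitrary finite Abelian $G$, typically by reducing the divisibility to a problem in $\mathbb F_{p}[x]$; rather than reproduce that analysis I would invoke those results for this step.
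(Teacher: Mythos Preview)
The paper does not prove this proposition at all: it simply records that the cyclic case is \cite[Theorem~1]{tijdeman1995} and the general case is \cite[Proposition~3]{sands2000}. Your proposal ultimately lands in the same place, invoking those same two references for the one step you cannot close, so in effect you and the paper take the same approach --- defer to the literature.

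One remark on your framing. The ``stability'' implication $\widehat{\mathbf 1}_A(\chi)=0\Rightarrow\widehat{\mathbf 1}_A(\chi^{p})=0$ that you isolate is, as your own counterexample shows, not a standalone fact about sets with $p\nmid|A|$, and it is not what Tijdeman or Sands actually establish. Tijdeman's argument (for cyclic $G$ of order $n$) is direct: in $\mathbb F_p[x]/(x^n-1)$ the Frobenius gives
\[
P_A(x^p)\,P_B(x)\equiv P_A(x)^{p}P_B(x)=P_A(x)^{p-1}\cdot P_A(x)P_B(x)\equiv|A|^{p-1}\sum_{j=0}^{n-1}x^{j}\equiv\sum_{j=0}^{n-1}x^{j},
\]
and since $P_A(x^p)P_B(x)\bmod(x^n-1)$ has $n$ non-negative integer coefficients summing to $|A|\,|B|=n$, each congruent to $1$ mod $p$, they are all exactly $1$; this is already $pA\oplus B=G$. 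So when you write ``invoke those results for this step'', you are really invoking the full proposition, and your Fourier scaffolding --- correct and pleasant as the two easy cases are --- does not genuinely reduce the problem before the citation.
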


\begin{Lm} \label{LmDiff} 
If $E$ is an equivalence class under $\sim$ as defined in \eqref{EqEquiv}, and $(A,B)$ is a tiling pair in $\G{p^m}$ with $\Delta A\cap E\neq\emptyset$, then $\Delta B\cap E=\emptyset$.
\end{Lm}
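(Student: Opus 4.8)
My plan is to argue by contradiction, exploiting the dilation-invariance of tiling pairs (Proposition \ref{PropScale}) together with the fact that in a $p$-group the size of a tile is forced to be a power of $p$.

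First I would set up the contradiction: assume that $\Delta A\cap E\neq\emptyset$ but also $\Delta B\cap E\neq\emptyset$, and pick $d\in\Delta A\cap E$ and $d'\in\Delta B\cap E$. Since $E$ is an equivalence class under $\sim$, the elements $d$ and $d'$ generate the same cyclic subgroup, $\bra{d}=\bra{d'}$; in particular they have the same order, which (as every element of $\mathbb Z_{p^m}\times\mathbb Z_{p^m}$ has order dividing $p^m$) is a power of $p$, say $p^k$ with $k\ge 1$ since $d\neq 0$. Writing $d'=qd$ for an integer $q$, the equality $\bra{d}=\bra{d'}$ forces $\gcd(q,p^k)=1$, hence $p\nmid q$.

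Next I would invoke the $p$-group structure. From $|A|\cdot|B|=|\mathbb Z_{p^m}\times\mathbb Z_{p^m}|=p^{2m}$ it follows that $|A|$ is a power of $p$, so $\gcd(q,|A|)=1$. Proposition \ref{PropScale} then says that $(qA,B)$ is again a tiling pair in $\mathbb Z_{p^m}\times\mathbb Z_{p^m}$, so by \eqref{EqDiff} we have $\Delta(qA)\cap\Delta B=\emptyset$. On the other hand, because $p\nmid q$ the map $x\mapsto qx$ is an automorphism of $\mathbb Z_{p^m}\times\mathbb Z_{p^m}$, whence $\Delta(qA)=q\,\Delta A$; writing $d=a_1-a_2$ with distinct $a_1,a_2\in A$ gives $d'=qd=qa_1-qa_2\in\Delta(qA)$. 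Since also $d'\in\Delta B$ by assumption, this contradicts $\Delta(qA)\cap\Delta B=\emptyset$, and the lemma follows.

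I do not anticipate a serious obstacle here; the argument rests entirely on spotting that Proposition \ref{PropScale} applies, with the dilation factor $q$ arising because any two elements of a single $\sim$-class are scalar multiples of one another. The only place the $p$-group hypothesis is genuinely used is to guarantee that this $q$ is simultaneously coprime to $\ord(d)$ and to $|A|$, both being powers of the single prime $p$; everything else is routine bookkeeping with difference sets.
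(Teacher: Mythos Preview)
Your proof is correct and follows essentially the same route as the paper: assume a common element in $\Delta A\cap E$ and $\Delta B\cap E$, write one as a scalar multiple $q$ (coprime to $p$) of the other, and apply Proposition~\ref{PropScale} to the dilated pair $(qA,B)$ to contradict \eqref{EqDiff}. Your write-up is slightly more explicit in justifying $\gcd(q,|A|)=1$ via $|A|\mid p^{2m}$, but the underlying idea is identical.
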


\begin{proof}
Suppose that $E$ generates a cyclic subgroup of size $p^t$, then we can view such a subgroup as the multiplicative group mod $p^t$, consequently $E\to qE$ is a bijection of $E$ itself for any $q$ that is co-prime to $p$. If on the contrary there are $a\in \Delta A\cap E$ and $b\in \Delta B\cap E$, then we can find some $q$ so that $qa\mapsto b$. By Proposition \ref{PropScale} we see that $(qA,B)$ is still a tiling pair but then we would have
$$b\in \Delta(qA)\cap\Delta B,$$
which contradicts \eqref{EqDiff}.
\end{proof}

\begin{Lm} \label{LmTSq}
If $A$ is a tile in $\G{p^m}$ with $|A|=p^{2m-1}$, then $A$ is spectral.
\end{Lm}

\begin{proof}
By Lemma \ref{LmDiff} there should be a non-trivial equivalence class that is disjoint with $\Delta A$, otherwise $B$ has to be trivial. Without loss of generality (by using a symplectic transformation) we may likewise assume that it is $E_{(p^{m-t},0)}$ for some $t$, then we get
$$\Delta A\subseteq (\G{p^m})\setminus (E_{(p^{m-t},0)}\cup\{(0,0)\}).$$
It is easy to verify that if we take
$$S=\mathbb Z_{p^m}\times D,$$
where
$$D=\{d_1\ldots d_{t-1}0d_{t+1}\ldots d_m: d_k\in\mathbb Z_p, k\neq t\},$$
contains all elements in $\mathbb Z_{p^m}$ whose base-$p$ expansion is $0$ at $t$-th digit, then we have
\begin{equation} \label{EqTSq}
\Z{S}=\Z{\mathbb Z_{p^m}\times\{0\}}\cup \Z{\{0\}\times D}=(\G{p^m})\setminus \left(E_{(p^{m-t},0)}\cup\{(0,0)\}\right).
\end{equation}
Indeed, in the right-hand side, $\mathbb Z_{p^m}\times\{0\}$ is a maximal cyclic subgroup, therefore by Lemma \ref{LmZ} \ref{Z1} we obtain
$$\Z{\mathbb Z_{p^m}\times\{0\}}=(\G{p^m})\setminus \left(\mathbb Z_{p^m}\times\{0\}\right),$$
while plugging $(a,0)\in\mathbb Z_{p^m}\times\{0\}$ into $\F{\{0\}\times D}$ yields
$$\F{\{0\}\times D}(a,0)=\prod_{\substack{k=1 \\ k\neq t}}^m\Phi_{p^k}(e^{2\pi ia/p^m}),$$
which is $0$ if $(a,0)\notin E_{(p^{m-t},0)}\cup\{(0,0)\}$ and non-zero otherwise. Therefore \eqref{EqTSq} holds and thus $\Delta A\subseteq \Z{S}$, it then follows from \eqref{EqAnn} that $(A,S)$ is a symplectic spectral pair, which by Lemma \ref{LmSEquiv} further implies that $A$ is spectral.
\end{proof}

\begin{Lm} \label{LmSTq}
If $A$ is a spectral set in $\G{p^m}$ with $p^{2m}>|A|\ge p^{2m-1}$, then $A$ must be a tile with $|A|=p^{2m-1}$.
\end{Lm}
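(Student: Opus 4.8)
The plan is to establish $|A| = p^{2m-1}$ first, and then upgrade this to a tiling using the structure this forces on $A$. Write $G = \mathbb{Z}_{p^m}\times\mathbb{Z}_{p^m}$ and let $S$ be a symplectic spectrum of $A$, so that $|S| = |A|$ and $\Delta S \subseteq Z(\widehat{\mathbf 1}_A^{sym})$ by \eqref{EqAnn}; put $T = \supp(\widehat{\mathbf 1}_A^{sym})$. Since $|A| < p^{2m} = |G|$, the uncertainty principle \eqref{EqUncertainty} gives $|T| \ge |G|/|A| > 1$, so $T$ properly contains $\{0\}$. By Lemma \ref{LmGen} the zero set $Z(\widehat{\mathbf 1}_A^{sym})$, hence also its complement $T$, is a union of $\sim$-equivalence classes, and since $\{0\}$ is its own class, $T$ contains the full class $E_x$ of some $x \neq 0$. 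Let $C = \bra{x}$, a cyclic subgroup of order $p^j$ with $1 \le j \le m$; then $E_x$ is precisely the set of generators of $C$.

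The heart of the argument is the following counting. Because $E_x \subseteq T$ and $\Delta S \cap T = \emptyset$, no difference of two distinct points of $S$ is a generator of $C$. I would then partition $G$ into its $p^{2m-j}$ cosets of $C$: inside each coset, translating into $C \cong \mathbb{Z}_{p^j}$ turns $S$ into a set whose difference set (which includes $0$) avoids the units of $\mathbb{Z}_{p^j}$ and therefore lies in the unique index-$p$ subgroup $p\mathbb{Z}_{p^j}$; this forces that set to sit inside a single coset of $pC$, so it has at most $p^{j-1}$ elements. Summing over the $p^{2m-j}$ cosets yields $|A| = |S| \le p^{2m-1}$, and combined with the hypothesis $|A| \ge p^{2m-1}$ we get $|A| = |S| = p^{2m-1}$; equality in the count moreover shows that $S$ meets every coset of $C$ in exactly one full coset of $pC$.

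Next I would run this same argument with $A$ and $S$ interchanged, which is legitimate since $(S,A)$ is also a symplectic spectral pair. The uncertainty principle applied to $S$ (now with $|S| = p^{2m-1} < p^{2m}$) together with Lemma \ref{LmGen} shows $\supp(\widehat{\mathbf 1}_S^{sym})$ contains the full class $E_{x'}$ of some $x' \neq 0$; set $C' = \bra{x'}$, of order $p^{j'}$. Repeating the coset count, now for $A$ and using $\Delta A \cap \supp(\widehat{\mathbf 1}_S^{sym}) = \emptyset$ (which comes from $\Delta A \subseteq Z(\widehat{\mathbf 1}_S^{sym})$), shows $A$ meets every coset of $C'$ in at most $p^{j'-1}$ points; since $|A| = p^{2m-1}$ this bound is attained, so $A$ meets each coset of $C'$ in exactly one coset of $pC'$.

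Finally I would read off the tiling. The last step says $A = V \oplus pC'$, where $V$ picks one point of $A$ from each coset of $C'$; in particular $V$ is a transversal of $C'$, so $V \oplus C' = G$. Choosing any transversal $W$ of $pC'$ inside $C'$ (then $|W| = [C':pC'] = p$ and $C' = pC' \oplus W$) gives $G = V \oplus C' = V \oplus pC' \oplus W = A \oplus W$, so $A$ is a tile with the size-$p$ complement $W$, completing the proof. I expect the second paragraph — the coset-counting bound $|S \cap (c+C)| \le p^{j-1}$ — to carry the real weight; the one conceptual point to get right is that the argument must be applied twice, once to $(A,S)$ to pin down $|A|$, and once to $(S,A)$ so that it is the support of the \emph{spectrum} that supplies the cyclic subgroup along which $A$ is organized into cosets. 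The degenerate endpoints $j = 1$ or $j' = 1$, where $pC$ or $pC'$ is trivial, should be checked but cause no trouble.
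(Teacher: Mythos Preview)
Your proof is correct, and the underlying idea coincides with the paper's: use the spectral relation $\Delta A\subseteq Z(\widehat{\mathbf 1}_S^{sym})$ together with the uncertainty principle on $S$ to locate an equivalence class $E_{x'}$ disjoint from $\Delta A$, and then build a size-$p$ tiling complement whose difference set sits inside $E_{x'}$.

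Where you diverge from the paper is in the amount of work invested. Your first pass (finding $E_x\subseteq\supp(\widehat{\mathbf 1}_A^{sym})$ and counting cosets of $C$ to pin down $|S|=p^{2m-1}$) is unnecessary: the hypothesis already gives $|S|=|A|<p^{2m}$, which is all that is needed to run your ``step~4'' directly. Likewise, the structural conclusion $A=V\oplus pC'$ in steps~5--6 is more than the lemma asks for. The paper simply observes that once $\Delta A\cap E_{x'}=\emptyset$, the set $W=\{0,x',2x',\dots,(p-1)x'\}$ (your $W$, the paper's $C_t\times\{0\}$) has $\Delta W\subseteq E_{x'}$, hence $A\oplus W$ is a genuine direct sum of size $|A|\cdot p\ge p^{2m}=|G|$ sitting inside $G$; this forces $A\oplus W=G$ and $|A|=p^{2m-1}$ in one stroke. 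Your longer route does buy something extra, namely the periodicity $A=V\oplus pC'$, which the paper's argument does not extract---so the detour is not without value, just not required here.
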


\begin{proof}
By Lemma \ref{LmSEquiv} $A$ is also symplectic spectral, let $S$ be a symplectic spectrum of $A$, first we notice that there should be some non-trivial equivalence class $E_{(a,b)}$ that is disjoint with $\Delta A$ (otherwise we would have $\Delta(\G{p^m})\subseteq \Z{S}$ which contradicts Lemma \ref{LmUncertainty} \ref{U1}), but then for the derived set $C_{(a,b)}$ we would have
$$\Delta A\cap \Delta C_{(a,b)}\subseteq \Delta A\cap E_{(a,b)}=\emptyset,$$
which by Lemma \ref{LmDelta} shows that $(A, C_{(a,b)})$ is a tiling pair and $|A|=p^{2m-1}$ (since $|C_{(a,b)}|=p$).
\end{proof}

\section{Structures in $\G{p^2}$}
\subsection{Notations}
Now let us recall the structure of $\G{p^2}$: there is a unique subgroup formed by all order $p$ elements, we denote it by $K$. Counting the number of generators, it is easy to see that there are $p+1$ number of proper and non-trivial subgroups in $K$, each of them is cyclic and they mutually intersect trivially. Similar arguments indicate that there are $p^2+p$ maximal cyclic subgroups in $\G{p^2}$, they can be divided into $p+1$ classes depending on which order $p$ subgroup do they contain, and there are precisely $p$ number of maximal cyclic subgroups in each class.

Since it is important to keep track of these classes we shall introduce the following projective notation: The order $p$ cyclic subgroup generated by $(ap,bp)$ will be denoted by $K_{b/a}$ (with the convention $b/a=\infty$ if $a=0$) so that $K_0, K_1,\ldots, K_{p-1}, K_{\infty}$ is the list of all order $p$ cyclic subgroups. For each $j\in\mathbb Z_p$ and each $k\in\{0,1,\ldots,p-1,\infty\}$ set
$$h_{j,k}=\begin{cases}(1,jp+k) & k\neq\infty, \\ (jp,1) & k=\infty.\end{cases}.$$
The maximal cyclic subgroup generated by $h_{j,k}$ will be denoted by $H_{j,k}$, so that the second subscript $k$ in $H_{j,k}$ actually indicate $H_{j,k}$ contains $K_k$, i.e.,
$$\bigcap_{j\in\mathbb Z_p}H_{j,k}=K_k, \quad K_k^{\perp_s}=(\bigcup_{j\in\mathbb Z_p}H_{j,k})\bigcup K.$$

For convenience, we slightly adjust the notation for equivalence classes used in the last section: from now on $E_{j,k}$ is the set of all generators in $H_{j,k}$, and 
$$C_{j,k}=\{0,h_{j,k}, 2h_{j,k},\ldots,(p-1)h_{j,k}\},$$ 
is its derived set. The following two pictures may help to build intuitions behind these notations.

\begin{figure}[H]
\begin{center}
\begin{tikzpicture}[scale=0.9]
\draw [fill=gray!30] (0,0) rectangle (4,1);
\draw (0,0) grid (4,4);
\node at (2.5, -0.75) {{\footnotesize{$H_{0,0}$}}};
\draw (5,0) grid (9,4);
\draw [fill=gray!30] (5,0) rectangle (6,1);
\draw [fill=gray!30] (6,2) rectangle (7,3);
\draw [fill=gray!30] (7,0) rectangle (8,1);
\draw [fill=gray!30] (8,2) rectangle (9,3);
\node at (7.5, -0.75) {{\footnotesize{$H_{1,0}$}}};
\draw (10,0) grid (14,4);
\draw [fill=gray!30] (10,0) rectangle (11,1);
\draw [fill=gray!30] (11,1) rectangle (12,2);
\draw [fill=gray!30] (12,2) rectangle (13,3);
\draw [fill=gray!30] (13,3) rectangle (14,4);
\node at (12.5, -0.75) {{\footnotesize{$H_{0,1}$}}};
\draw (15,0) grid (19,4);
\draw [fill=gray!30] (15,0) rectangle (16,1);
\draw [fill=gray!30] (16,3) rectangle (17,4);
\draw [fill=gray!30] (17,2) rectangle (18,3);
\draw [fill=gray!30] (18,1) rectangle (19,2);
\node at (17.5, -0.75) {{\footnotesize{$H_{1,1}$}}};
\draw [fill=gray!30] (20,0) rectangle (21,4);
\draw (20,0) grid (24,4);
\node at (22.5, -0.75) {{\footnotesize{$H_{0,\infty}$}}};
\draw (25,0) grid (29,4);
\draw [fill=gray!30] (25,0) rectangle (26,1);
\draw [fill=gray!30] (27,1) rectangle (28,2);
\draw [fill=gray!30] (25,2) rectangle (26,3);
\draw [fill=gray!30] (27,3) rectangle (28,4);
\node at (27.5, -0.75) {{\footnotesize{$H_{1,\infty}$}}};
\draw (30,0) grid (34,4);
\draw [fill=gray!30] (30,0) rectangle (31,1);
\draw [fill=gray!30] (32,0) rectangle (33,1);
\draw [fill=gray!30] (30,2) rectangle (31,3);
\draw [fill=gray!30] (32,2) rectangle (33,3);
\node at (32, -0.7) {{\footnotesize{$K$}}};
\end{tikzpicture}
\caption{Order $4$ subgroups in $\mathbb Z_4\times\mathbb Z_4$} \label{FigCosets}
\end{center}
\end{figure}

\begin{figure}[H]
\begin{center}
\begin{tikzpicture}[scale=0.9]
\draw [fill=gray!30] (0,0) rectangle (1,1);
\draw [fill=gray!30] (2,0) rectangle (3,1);
\draw (0,0) grid (4,4);
\node at (2.5, -0.75) {{\footnotesize{$K_0$}}};
\draw [fill=gray!30] (6,0) rectangle (7,1);
\draw [fill=gray!30] (8,0) rectangle (9,1);
\draw [fill=gray!30] (6,2) rectangle (7,3);
\draw [fill=gray!30] (8,2) rectangle (9,3);
\draw (5,0) grid (9,4);
\node at (7.5, -1) {{\footnotesize{$\bigcup\limits_{j\in\mathbb Z_p}E_{j,0}$}}};
\draw (10,0) grid (14,4);
\draw [fill=gray!30] (10,0) rectangle (11,1);
\draw [fill=gray!30] (12,2) rectangle (13,3);
\node at (12.5, -0.75) {{\footnotesize{$K_1$}}};
\draw (15,0) grid (19,4);
\draw [fill=gray!30] (16,1) rectangle (17,2);
\draw [fill=gray!30] (18,3) rectangle (19,4);
\draw [fill=gray!30] (16,3) rectangle (17,4);
\draw [fill=gray!30] (18,1) rectangle (19,2);
\node at (17.5, -1) {{\footnotesize{$\bigcup\limits_{j\in\mathbb Z_p}E_{j,1}$}}};
\draw (20,0) grid (24,4);
\draw [fill=gray!30] (20,0) rectangle (21,1);
\draw [fill=gray!30] (20,2) rectangle (21,3);
\node at (22.5, -0.75) {{\footnotesize{$K_{\infty}$}}};
\draw [fill=gray!30] (25,1) rectangle (26,2);
\draw [fill=gray!30] (25,3) rectangle (26,4);
\draw [fill=gray!30] (27,1) rectangle (28,2);
\draw [fill=gray!30] (27,3) rectangle (28,4);
\draw (25,0) grid (29,4);
\node at (27.5, -1) {{\footnotesize{$\bigcup\limits_{j\in\mathbb Z_p}E_{j,\infty}$}}};
\end{tikzpicture}
\caption{$K_k$ and $ E_{j,k}$ in $\mathbb Z_4\times\mathbb Z_4$} \label{FigGen}
\end{center}
\end{figure}

The equivalence classes defined in the last section can be arranged into a directed tree, with each node being an equivalence class, and paths given by inclusion of subgroups. The root node is the equivalence class for $(0,0)$, it has $p+1$ children that form the first layer (since there are $p+1$ order $p$ subgroups). Each node in the first layer has $p$ children (since every subgroup of order $p$ is contained in $p$ number of order $p^2$ subgroups). Below is the example for $\mathbb Z_4\times \mathbb Z_4$: 
\begin{figure}[H]
\begin{center}
\begin{tikzpicture}
[
    level 1/.style={sibling distance=50mm},
    level 2/.style={sibling distance=25mm},
]

  \node {\scriptsize{$\{(0,0)\}$}}
    child {node {\scriptsize{$\Delta K_0$}}
      child {node {\scriptsize{$E_{0,0}$}}}
      child {node {\scriptsize{$E_{1,0}$}}}
    }
    child {node {\scriptsize{$\Delta K_1$}}
      child {node {\scriptsize{$E_{0,1}$}}}
      child {node {\scriptsize{$E_{1,1}$}}}
    }
    child {node {\scriptsize{$\Delta K_{\infty}$}}
      child {node {\scriptsize{$E_{0,\infty}$}}}
      child {node {\scriptsize{$E_{1,\infty}$}}}
    };
\end{tikzpicture}
\caption{The tree of equivalence classes in $\mathbb Z_4\times\mathbb Z_4$}
\end{center}
\end{figure}

Every path starting from the root will uniquely corresponds to a subgroup. This tree is similar to the homogenous tree introduced in \cite{fan2016, fan2019}, and it helps to keep track of different cases in the last section, in which we will discuss whether the difference set or the zero set contains or excludes branches and layers in this tree.

\subsection{Mutual Annihilation}
In this subsection we describe a special phenomenon in $\G{p^2}$, the motivating example is when $A=\{0,\ldots,p-1\}\times\{0,\ldots,p-1\}$, then $A$ is a not only a tiling complement of $K$, but also its symplectic spectrum. This is because $\Z{A}$ is disjoint with $\Delta A$, we show that this disjointness property is owned by all tiling complements of $K$.

\begin{Lm} \label{LmSelf}
If $A\subset\G{p^2}$ is a tiling complement of $K$, then 
$$A\cap\Z{A}=\emptyset.$$
\end{Lm}

\begin{proof}
Without loss of generality (by shifting, which does change $\Z{A}$ we may assume that $(0,0)\in A$. And clearly $(0,0)$ is not in $\Z{A}$.

Suppose $a\in A\setminus\{(0,0)\}$, then $\ord(a)=p^2$ since $A$ is a tiling complement of $K$, then by Lemma \ref{LmLag} we have
$$\bra{a}=\bra{a}^{\perp_s}.$$
If $a\in \Z{A}$, then by Lemma \ref{LmCount} we will have
$$|A\cap\bra{pa}^{\perp_s}|=p|A\cap\bra{a}^{\perp_s}|.$$
Observe that $|\bra{pa}|=p$, thus by \eqref{EqHHs} we have $|\bra{pa}^{\perp_s}|=|\G{p^2}|/|\bra{pa}|=p^3$,  and $K$ is a subset of $\bra{pa}^{\perp_s}$. Since $|K|=p^2$, and $A$ is a tiling complement of $K$, we obtain
$$|A\cap\bra{pa}^{\perp_s}|\le\frac{|\bra{pa}^{\perp_s}|}{|K|}=p.$$
Combing these three equations above we get
$$p\ge|A\cap\bra{pa}^{\perp_s}|=p|A\cap\bra{a}^{\perp_s}|=p|A\cap\bra{a}|,$$
i.e.,
$$|A\cap\bra{a}|\le1,$$
which is a contradiction since $|A\cap\bra{a}|\ge 2$ must hold because $A\cap\bra{a}$ contains at least the identity and $a$ itself.
\end{proof}
 
\begin{Lm} \label{LmKC}
If $A$ is a tiling complement of $K$ in $\G{p^2}$, then
\begin{equation} \label{EqKC}
\Delta A\cap\Z{A}=\emptyset.
\end{equation}
and $(A,B)$ is a symplectic spectral pair for any tiling complement $B$ of $A$, while $(A,S)$ is a tiling pair for any symplectic spectrum $S$ of $A$.
\end{Lm}

\begin{proof}
Assume the contrary that there exist distinct $a,a'\in A$ such that $a-a'\in \Delta A\cap\Z{A}$. Consider now the set
$$A'=A-a',$$
which contains $a-a'$, and thus
\begin{equation} \label{EqDTemp}
a-a'\in A'\cap\Z{A}.
\end{equation}
Clearly $|A'|=|A|$, $\Delta A'=\Delta A$ hold, hence $A'$ is still a tiling complement of $K$, consequently by Lemma \ref{LmSelf} we shall have 
$$A' \cap\Z{A'}=\emptyset.$$
On the other hand, it is also easy to see that $\Z{A'}=\Z{A}$, therefore we shall have
$$A' \cap\Z{A}=A' \cap\Z{A'}=\emptyset,$$
which contradicts \eqref{EqDTemp}. This establishes \eqref{EqKC}.

Now if $(A,B)$ is a tiling pair, then since $\Delta A\cap\Z{A}=\emptyset$, by \eqref{EqDecomp} we must have $\Delta A\subseteq\Z{B}$, which indicates that $(A,B)$ is also a symplectic spectral pair. Conversely if $(A,S)$ is a symplectic spectral pair, then $|A|=|S|=p^2$, and $\Delta S\subseteq \Z{A}$, consequently we get
$$\Delta A\cap\Delta S\subseteq\Delta A\cap \Z{A}=\emptyset,$$
which by Lemma \ref{LmDelta} indicates that $(A,S)$ is also a tiling pair.
\end{proof}

\subsection{Counting and equi-distribution}
In this subsection we aim to obtain some estimate on the size of the intersection between a tiling/spectral set and different subgroups/equivalence classes. Lemma \ref{LmCKC} below and Lemma \ref{LmPL2} in the subsequent part described an equi-distribution (of $A$ in different equivalence classes or subgroups) phenomenon that is similar to \cite{shi2020} and references therein.

\begin{Lm} \label{LmCKC}
Let $A\subseteq\G{p^2}$, if
$$\Delta K\in \Z{A},$$
then
\begin{equation} \label{EqCKC1}
|A|=p^2|A\cap K|,
\end{equation}
and for each $k\in\{0,1,\ldots,p-1,\infty\}$ we have
\begin{equation} \label{EqCKC2}
|A\cap K_k^{\perp_s}|=p|A\cap K|.
\end{equation}
Moreover, if $\ord(s)=p^2$ and $s\in\Z{A}$, then 
\begin{equation} \label{EqCKC3}
|A\cap \bra{s}|=|A\cap K|.
\end{equation}
\end{Lm}

\begin{proof}
Let
$$b=|A\cap K|,$$
and for each $k\in\{0,1,\ldots,p-1,\infty\}$, set
$$a_k=|A\cap(K_k^{\perp_s}\setminus K)|,$$
then we have
$$|A|=a_0+\ldots+a_{p-1}+a_{\infty}+b.$$
Let $x_k$ be an arbitrary element of $\Delta K_k$, then $\bra{x_k}=K_k$, $\ord(x_k)=p$ and thus $\bra{px_k}^{\perp_s}$ is the entire group $\G{p^2}$. By assumption we have $x_k\in \Z{A}$, thus applying Lemma \ref{LmCount} we get
$$a_0+\ldots+a_{p-1}+a_{\infty}+b=|A|=|A\cap\bra{px_k}^{\perp_s}|=p|A\cap\bra{x_k}^{\perp_s}|=p|A\cap K_k^{\perp_s}|=p(b+a_k).$$
This equation holds for each $k\in\{0,1,\ldots,p-1,\infty\}$. Adding these $p+1$ equations together we get 
$$(p+1)(a_0+\ldots+a_{p-1}+a_{\infty}+b)=(p+1)pb+p(a_0+\ldots+a_{p-1}+a_{\infty}),$$
i.e.,
$$a_0+\ldots+a_{p-1}+a_{\infty}=(p^2-1)b,$$
which means
$$|A|=a_0+\ldots+a_{p-1}+a_{\infty}+b=(p^2-1)b+b=p^2b=p^2|A\cap K|.$$
This establishes \eqref{EqCKC1}. Plug it into the relation $|A|=p|A\cap K_k^{\perp_s}|$ we get \eqref{EqCKC2}.

Finally if $\ord(s)=p^2$, then $\bra{ps}=K_k$ for some $k\in\{0,1,\ldots,p-1,\infty\}$, therefore we have
$$p|A\cap K|=|A\cap K_k^{\perp_s}|=|A\cap\bra{ps}^{\perp_s}|=p|A\cap\bra{s}|,$$
where the first equality is \eqref{EqCKC2} and the last equality follows from Lemma \ref{LmCount}. Dividing $p$ out from both sides leads to \eqref{EqCKC3}.
\end{proof}

\begin{Lm} \label{LmP2}
Let $A$ be a subset of $\G{p^2}$.
\begin{enumerate}[label=(\roman*), leftmargin=*]
\item \label{P2TS} If $|A|=p^2$ and there are distinct $k,k'\in\{0, 1, \ldots, p-1, \infty\}$ such that
$$\Delta K_{k}\subseteq \Z{A}, \quad \left(\bigcup_{j\in\mathbb Z_p}E_{j,k'}\right)\subseteq \Z{A},$$
then $A$ is a symplectic spectral set in $\G{p^2}$.

\item \label{P2ST} If $|A|\ge p^2$ and there are distinct $k,k'\in\{0, 1, \ldots, p-1, \infty\}$ such that
$$\Delta K_k\cap \Delta A=\left(\bigcup_{j\in\mathbb Z_p}E_{j,k'}\right)\cap \Delta A=\emptyset,$$
then $A$ is a tiling set in $\G{p^2}$ with $|A|=p^2$.
\end{enumerate}
\end{Lm}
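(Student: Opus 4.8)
The plan is to handle both parts by a single construction: from the pair $(k,k')$ I will produce a set $B\subseteq\mathbb Z_{p^2}\times\mathbb Z_{p^2}$ with $|B|=p^2$ and
$$\Delta B\subseteq \Delta K_k\cup\Big(\bigcup_{j\in\mathbb Z_p}E_{j,k'}\Big).$$
Granting this, part \ref{P2TS} is immediate from \eqref{EqAnn} (take $B$ as a symplectic spectrum of $A$), while part \ref{P2ST} follows from \eqref{EqDiff} (take $B$ as a tiling complement of $A$) once we know $|A|=p^2$.

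The set is $B=K_k\oplus C_{0,k'}$. That this is a genuine set of size $p^2$ rests on the following: $H_{0,k'}=\bra{h_{0,k'}}$ is cyclic of order $p^2$ with unique order-$p$ subgroup $K_{k'}\neq K_k$, so $K_k\cap H_{0,k'}=\{0\}$; and any two distinct elements of $C_{0,k'}$ differ by a unit multiple of $h_{0,k'}$ (this is the inclusion $\Delta C_{0,k'}\subseteq E_{0,k'}$ recorded above), hence by an element of order $p^2$, which cannot lie in the exponent-$p$ group $K_k$. Consequently $\kappa_1+\gamma_1=\kappa_2+\gamma_2$ with $\kappa_i\in K_k$, $\gamma_i\in C_{0,k'}$ forces $\gamma_1=\gamma_2$ and then $\kappa_1=\kappa_2$, so $|B|=|K_k|\cdot|C_{0,k'}|=p^2$.

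The crux — and the step I expect to be the main obstacle — is the $\Delta B$ computation. A (necessarily nonzero) element of $\Delta B$ has the form $w+v$ with $w\in K_k$ and $v\in\Delta C_{0,k'}\cup\{0\}$. If $v=0$ it lies in $\Delta K_k$; if $w=0$ it lies in $\Delta C_{0,k'}\subseteq E_{0,k'}$; and if $w,v$ are both nonzero, then $w$ has order $p$ while $v$ generates $H_{0,k'}$, so $pv$ generates $K_{k'}$ and $p(w+v)=pv\neq 0$, whence $w+v$ has order $p^2$. Then $\bra{w+v}$ is an order-$p^2$ subgroup, hence a cyclic Lagrangian (Lemma \ref{LmLag}), whose order-$p$ subgroup is $\bra{p(w+v)}=\bra{pv}=K_{k'}$; therefore $\bra{w+v}=H_{j,k'}$ for some $j$, and since $w+v$ has order $p^2$ it is a generator, so $w+v\in E_{j,k'}$. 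In every case $w+v\in\Delta K_k\cup\bigcup_j E_{j,k'}$, as wanted. (Alternatively one may first apply a symplectomorphism carrying $K_k,K_{k'}$ to $p\mathbb Z_{p^2}\times\{0\}$ and $\{0\}\times p\mathbb Z_{p^2}$ and run this as a short coordinate check with $B=p\mathbb Z_{p^2}\times\{0,1,\dots,p-1\}$.)

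With $B$ at hand, part \ref{P2TS} is finished: by hypothesis $\Delta B\subseteq Z(\widehat{\mathbf 1}_A^{sym})$, and $|B|=p^2=|A|$, so $(A,B)$ is a symplectic spectral pair by \eqref{EqAnn}. For part \ref{P2ST}, the hypothesis gives $\Delta A\cap\Delta B=\emptyset$, so the map $A\times B\to\mathbb Z_{p^2}\times\mathbb Z_{p^2}$, $(a,b)\mapsto a+b$, is injective (an equality $a+b=a'+b'$ would put $a-a'=b'-b$ into $\Delta A\cap\Delta B$ unless $a=a'$); hence $|A|\cdot p^2=|A|\cdot|B|\le p^4$, i.e.\ $|A|\le p^2$, and together with $|A|\ge p^2$ this gives $|A|=p^2$. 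Then $|A|\cdot|B|=p^4$ and $\Delta A\cap\Delta B=\emptyset$ show, via \eqref{EqDiff}, that $(A,B)$ is a tiling pair, so $A$ is a tile of size $p^2$. The only substantive step is the $\Delta B$ computation above; everything else is bookkeeping with the characterizations \eqref{EqDiff} and \eqref{EqAnn} already in hand.
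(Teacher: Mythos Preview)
Your proof is correct and follows essentially the same approach as the paper: both construct the set $B=K_k\oplus C_{0,k'}$, verify $\Delta B\subseteq\Delta K_k\cup\bigcup_j E_{j,k'}$, and then invoke \eqref{EqAnn} for part \ref{P2TS} and \eqref{EqDiff} (with the pigeonhole bound $|A|\le p^2$) for part \ref{P2ST}. The only difference is cosmetic: the paper first applies a symplectomorphism to reduce to $k=0$, $k'=\infty$ and then declares the $\Delta B$ inclusion ``straightforward,'' whereas you work in general coordinates and spell out the order computation; your alternative parenthetical remark is exactly the paper's route.
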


\begin{proof}
Without loss of generality (by using a proper symplectic transformation) we may assume $k=0$, $k'=\infty$. 

Every element of $\Delta(K_0\oplus C_{0,\infty})$ can be written as $a+b$ with 
$$a\in\{(0,0)\}\cup\Delta K_0, \quad b\in\{(0,0)\}\cup \Delta  C_{0,\infty},$$
and $a,b$ can not be $0$ simultaneously (since $(0,0)\notin\Delta(K_0\oplus C_{0,\infty})$). It is also clear that
$$\{(0,0)\}\cup\Delta K_0=K_0, \quad \{(0,0)\}\cup \Delta  C_{0,\infty}\subseteq \{0\}\cup E_{0,\infty}.$$

If $b=0$, then $a$ must not be $0$, and $a+b=a$ is in $\Delta K_0$. Else if $b\neq0$, then $a+b\in K_0\oplus E_{0,\infty}$, but
$$K_0\oplus E_{0,\infty}=K_0\oplus (H_{0,\infty}\setminus K_{\infty})=(K_0\oplus H_{0,\infty})\setminus (K_0\oplus K_{\infty})=K_0^{\perp_s}\setminus K=\bigcup_{j\in\mathbb Z_p}E_{j,\infty},$$ 
therefore in this case $a+b$ is an element of $\bigcup_{j\in\mathbb Z_p}E_{j,\infty}$. Together we can conclude that
\begin{equation} \label{EqP2}
\Delta(K_0\oplus C_{0,\infty})\subseteq \Delta K_0\bigcup\left(\bigcup_{j\in\mathbb Z_p}E_{j,\infty}\right),
\end{equation}
always holds.

The assumption of \ref{P2TS} combined with $\eqref{EqP2}$ indicates that $\Delta(K_0\oplus C_{0,\infty})\subseteq\Z{A}$, thus $(A,K_0\oplus C_{0,\infty})$ is a symplectic spectral pair by \eqref{EqAnn}, this establishes \ref{P2TS}.

The assumption of \ref{P2ST} combined with $\eqref{EqP2}$ indicates that $\Delta(K_0\oplus C_{0,\infty})\cap \Delta A=\emptyset$. Also we have
$$|A\oplus (K_0\oplus C_{0,\infty})|=|A|\cdot |K_0\oplus C_{0,\infty}|\ge p^2\cdot p^2=|\mathbb Z_n\times\mathbb Z_n|,$$
therefore by Lemma \ref{LmDelta} we can conclude that $(A,K_0\oplus C_{0,\infty})$ is a tiling pair and $|A|$ must be $p^2$. 
\end{proof}

\begin{Lm} \label{LmPL}
Let $A\subset\G{p^2}$ with $|A|=p^2$, if there exist distinct $k_0,k_1\in\{0, 1, \ldots, p-1, \infty\}$ as well as some $j_0\in\mathbb Z_p$ such that 
$$\Delta H_{j_0,k_0}\subseteq \Z{A}, \quad \left(\bigcup_{j\in\mathbb Z_p}E_{j,k_1}\right)\subseteq \Z{A}.$$ 
then 
$$\left(\bigcup_{j\in\mathbb Z_p}E_{j,k_0}\right)\subseteq \Z{A}.$$
\end{Lm}

\begin{proof}
Without loss of generality (by using a proper symplectic transformation) we may assume $j_0=k_0=0$ and $k_1=\infty$, i.e.,
$$\Delta H_{0,0}\subseteq \Z{A}, \quad \left(\bigcup_{j\in\mathbb Z_p}E_{j,\infty}\right)\subseteq \Z{A}.$$
By shifting (which changes neither $\Z{A}$ nor $|A|$) we may also assume that $A$ contains $(0,0)$. Let $P_1$ and $P_2$ be coordinate projections so that if $a=(a_1,a_2)$, then
$$P_1a=(a_1,0), \quad P_2a=(0,a_2).$$
Take an arbitrary $(s,0)\in\Delta H_{0,0}$, then 
$$\F{A}(s,0)=\sum_{(a_1,a_2)\in A}e^{-2\pi ia_2s/p^2}.$$
is the mask polynomial $\sum_{(a_1,a_2)\in A}z^{a_2}$ evaluated at $z=e^{-2\pi is/p^2}$. Therefore $\Delta H_{0,0}\subseteq \Z{A}$ implies that the mask polynomial has to be divisible by $\Phi_p(z)\Phi_{p^2}(z)$, and thus the multiset 
$$P_2A=\{(0,a_2): (a_1,a_2)\in A\},$$ 
must equal the set $\{0\}\times\mathbb Z_{p^2}$ (hence $P_2A$ is actually a usual set in this case). Now for each $m=0,1,\ldots,p-1$, take 
$$A_m=\{(a,b)\in A: b\equiv m \bmod p\},$$
then $A_0,\ldots, A_{p-1}$ form a partition of $A$, and $P_2A_m$ equals the set $\{0\}\times\{m+cp: c=0,1,\ldots,p-1\}$.

For each $j=0,1,\ldots,p-1$, plug $(jp,1)\in E_{j,\infty}$ into $\widehat{\mathbf 1}_A^{sym}$ we obtain
\begin{equation} \label{EqUV}
0=\F{A}(jp,1)=\sum_{m\in\mathbb Z_p}\sum_{(a,b)\in A_m}e^{2\pi i(a-pjm)/p^{2}}=u\cdot w^{(j)},
\end{equation}
where the $m$-th coordinates of $u,w^{(j)}\in\mathbb C^p$ are respectively
\begin{equation} \label{EqU}
u_m=\sum_{(a,b)\in A_m}e^{2\pi ia/p^2}, \quad w_m^{(j)}=e^{2\pi jm/p}.
\end{equation}
Notice that $\{w^{(j)}\}_{j\in\mathbb Z_p}$ is the orthogonal Fourier basis in $\mathbb C^p$, thus \eqref{EqUV} vanishes for all $j$ implies that $u=0$, i.e., each $u_m$ is $0$. On the other hand, each $u_m$ is the mask polynomial $\sum_{(a,b)\in A_m}z^a$ evaluated at $z=e^{2\pi i/p^2}$, thus its being $0$ implies that the polynomial is divisible by $\Phi_{p^2}(z)$, therefore $P_1A_{m}$ must take the form:
\begin{equation} \label{EqD}
P_1A_{m}=\{a_m+cp:c=0,1,\ldots,p-1\}\times\{0\},
\end{equation}
for some $a_m\in\{0,1,\ldots,p-1\}$. 

Finally, for each $j=0,1,\ldots,p-1$, taking $(1, jp)\in E_{j,0}$ we get
$$\F{A}(1,jp)=\sum_{m\in\mathbb Z_p}\sum_{c\in\mathbb Z_p}e^{2\pi i(a_mjp-cp-m)/p^2}=\sum_{m\in\mathbb Z_p}e^{2\pi i(jpa_m-m)/p^2}\Phi_p(e^{2\pi i/p})=0,$$
i.e., $\left(\bigcup\limits_{j\in\mathbb Z_p}E_{j,0}\right)\subseteq \Z{A}$, which is the desired result.
\end{proof}

Lemma \ref{LmPL2} below is only an intermediate step to prove Lemma \ref{LmPL3}, which is the actual statement used in the proof of the main theorem. The proof of both of them are quite lengthy and tedious, but together they only intend to handle a corner case in the proof of Theorem \ref{ThmMain}, for better reading experience the author recommends to skip them for now and only get back when needed.

\begin{Lm} \label{LmPL2}
Let $A\subset\G{p^2}$ with $|A|=dp^2$ for some $d\in\{2, 3, \ldots, p-1\}$, if there are distinct $k_0,k_1,k_2\in\{0, 1, \ldots, p-1, \infty\}$ and some $j_0\in\mathbb Z_p$ such that
$$\Delta H_{j_0,k_0}\subseteq \Z{A}, \quad \left(\bigcup_{j\in\mathbb Z_p}E_{j,k_1}\right)\subseteq \Z{A}, \quad \Delta K_{k_2}\subseteq \Z{A},$$ 
then we have
\begin{equation} \label{EqZAK}
K\cap \Z{A}=K\setminus K_{k_1},
\end{equation}
and for each $j\in\mathbb Z_p$ it holds that
\begin{equation} \label{EqHJInf}
A\cap  E_{j,k_1}\neq\emptyset,
\end{equation} 
while for every $k\neq k_1$ it holds that
\begin{equation} \label{EqKKP}
|K_{k}^{\perp_s}\cap A|=dp.
\end{equation}
If $A$ is also spectral, then
\begin{equation} \label{EqCountings}
|K_{k_1}^{\perp_s}\cap A|=p^2, \quad |K_{k_1}\cap A|=1, \quad |K\cap A|=|H_{j,k_1}\cap A|=p,
\end{equation}
and for every $E_{j,k}\subseteq \Z{A}$ with $k\neq k_1, j\in\mathbb Z_p$ we have
\begin{equation} \label{EqHJK}
|A\cap H_{j,k}|=d.
\end{equation}
\end{Lm}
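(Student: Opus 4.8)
The plan is to carry out the computation of Lemma \ref{LmPL} to cash in the first two hypotheses, layer the third hypothesis $\Delta K_{k_2}\subseteq Z(\widehat{\mathbf 1}_A^{sym})$ on top of it, and then extract the counting identities with Lemma \ref{LmCount}. First apply a symplectomorphism together with a translation so that $k_0=j_0=0$, $k_1=\infty$, and $(0,0)\in A$; write $P_1,P_2$ for the coordinate projections. Exactly as in Lemma \ref{LmPL}, $\Delta H_{0,0}\subseteq Z(\widehat{\mathbf 1}_A^{sym})$ forces $P_2A$ to consist of $d$ copies of $\{0\}\times\mathbb Z_{p^2}$, so the blocks $A_m=\{(a,b)\in A:b\equiv m\bmod p\}$ each have size $dp$, and $\bigcup_j E_{j,\infty}\subseteq Z(\widehat{\mathbf 1}_A^{sym})$ forces each $P_1A_m$ to be a union of $d$ full residue classes modulo $p$ inside $\mathbb Z_{p^2}\times\{0\}$; record the residues as $a_{m,1},\dots,a_{m,d}\in\mathbb Z_p$ and set $\delta_{m,r}=\#\{t:a_{m,t}\equiv m+r\bmod p\}$, a $p\times p$ matrix of nonnegative integers with all row sums equal to $d$.

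Plugging a generator of $K_{k_2}$ (of order $p$) into $\widehat{\mathbf 1}_A^{sym}$ and using $\Phi_p$-divisibility of the resulting polynomial shows $A$ is equidistributed over the $p$ cosets of $K_{k_2}^{\perp_s}$; reducing modulo $p$ this is a second balance condition on $\delta$, namely that a suitable $k_2$-twisted diagonal sum of $\delta$ also equals $d$. In parallel, Lemma \ref{LmCount} applied to a generator of any $H_{j,\infty}$ — whose $p$-th multiple generates $K_\infty$ — yields $|A\cap K_\infty^{\perp_s}|=p\,|A\cap H_{j,\infty}|$ for every $j$, so the numbers $|A\cap H_{j,\infty}|$ all coincide; applied to a generator of $H_{0,0}$ together with $|A\cap H_{0,0}|=d$ it yields $|A\cap K_0^{\perp_s}|=dp$, and the same device gives $|A\cap K|=p\,|A\cap K_\infty|$.

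The core step is \eqref{EqZAK}. Restricting $\widehat{\mathbf 1}_A^{sym}$ to $K\cong\mathbb Z_p\times\mathbb Z_p$ makes it, up to the factor $p$, the symplectic Fourier transform of the mod-$p$ image of $A$, so the claim is that this transform vanishes on $K_k\setminus\{(0,0)\}$ for every $k\neq\infty$ and is nonzero on $K_\infty\setminus\{(0,0)\}$. For the vanishing half one translates "generators of $K_k$ lie in $Z$" into "$A$ equidistributed over the cosets of $K_k^{\perp_s}$", i.e. a twisted diagonal sum of $\delta$ equals $d$, and one must deduce all of these from the two already known (the $k_0$ and $k_2$ directions) together with the rigid block structure forced by $1<d<p$. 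For the nonvanishing half argue by contradiction: if $K_\infty\setminus\{(0,0)\}$ were also in $Z$ then all of $\Delta K$ would be in $Z$, which together with $\bigcup_j E_{j,\infty}\subseteq Z$ forces $\Delta K_\infty^{\perp_s}\subseteq Z$, hence $A$ equidistributed over the $p^3$ cosets of $K_\infty$ and $|A|/p^3=d/p\in\mathbb N$, impossible. Granting \eqref{EqZAK}, \eqref{EqKKP} is its Poisson dual; \eqref{EqHJInf} follows by ruling out $A\cap K_\infty^{\perp_s}=A\cap K$ via the counting above; and \eqref{EqHJK} follows from one more use of Lemma \ref{LmCount}, since when $E_{j,k}\subseteq Z$ with $k\neq\infty$ we have $\Delta H_{j,k}\subseteq Z$ (the $p$-torsion part handled by \eqref{EqZAK}), so $|A\cap K_k^{\perp_s}|=p\,|A\cap H_{j,k}|$ and \eqref{EqKKP} gives $|A\cap H_{j,k}|=d$. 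Finally, if $A$ is spectral with symplectic spectrum $S$, then $|S|=dp^2$, $(S,A)$ is again a symplectic spectral pair, and by \eqref{EqZAK} $\Delta S$ meets $K$ only inside $K\setminus K_\infty$, i.e. $S$ hits each coset of $K_\infty$ at most once; since \eqref{EqCountings} amounts to $|A\cap K_\infty^{\perp_s}|=p^2$ and $|A\cap K_\infty|=1$ (the rest following from $|A\cap K|=p|A\cap K_\infty|$ and $|A\cap K_\infty^{\perp_s}|=p|A\cap H_{j,\infty}|$), one obtains it by confronting this with the dual counting on $S$.

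I expect the main obstacle to be the vanishing half of \eqref{EqZAK}: the hypotheses $\Delta H_{0,0}\subseteq Z$ and $\Delta K_{k_2}\subseteq Z$ only supply zeros along two of the $p+1$ directions inside $K$, and one must propagate these to all directions except $\infty$ using $\bigcup_j E_{j,\infty}\subseteq Z$ and the fact that each $P_1A_m$ is a union of full residue classes — a consequence of $p>d$. It is this propagation, rather than any single computation, that is delicate, and keeping careful track of the classes $K_k$ as in Section 4 is essential; the same caveat applies to the dual counting in the spectral case, where one must check that the constraints harvested from $S$ are genuinely independent of those already known for $A$.
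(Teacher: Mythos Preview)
Your outline follows the paper's proof closely: reduce to $j_0=k_0=0$, $k_1=\infty$ via a symplectomorphism, inherit the block structure $A_m$ and residue data $a_{m,t}$ from Lemma~\ref{LmPL}, layer on the $\Delta K_{k_2}$ hypothesis, and extract the counts with Lemma~\ref{LmCount}. You are right that the crux is the vanishing half of \eqref{EqZAK}, and your honesty about lacking a concrete mechanism for ``propagating from two directions to all directions except $\infty$'' is well placed: the paper does not supply one either, and in fact none exists under the stated hypotheses.

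After the reduction, the hypotheses force only (i) each row $m$ carries exactly $d$ residues $a_{m,t}\in\mathbb Z_p$, and (ii) the multiset $\{a_{m,t}-mk_2^{-1}:m,t\}$ equals $d$ copies of $\mathbb Z_p$. Equidistribution along rows and along one diagonal family does \emph{not} imply equidistribution along every non-vertical direction. For $p=3$, $d=2$, $k_2=1$, take $a_{0,*}=\{0,1\}$, $a_{1,*}=\{0,2\}$, $a_{2,*}=\{1,2\}$: the $k_2=1$ balance holds but along $k=2$ one gets $\{a_{m,t}-2m\}=\{0,1,0,1,0,1\}$, and any $A$ built over this data (with $P_2A$ twice $\mathbb Z_9$ and each $P_1A_m$ the prescribed pair of full residue classes) satisfies all three hypotheses yet has $\widehat{\mathbf 1}_A^{sym}(3,6)\neq 0$, so $(3,6)\in K\setminus K_\infty$ lies outside $Z(\widehat{\mathbf 1}_A^{sym})$. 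The paper's argument is the polynomial $Q_A(x)=\sum_{m,t}x^{a_{m,t}-mk_2^{-1}}$ together with $\Phi_p\mid Q_A$; but $pQ_A(e^{2\pi ij/p})=\widehat{\mathbf 1}_A^{sym}(jk_2^{-1}p,\,jp)$, and as $j$ runs over $\{1,\dots,p-1\}$ these points sweep out only $\Delta K_{k_2}$, so the divisibility merely re-derives the hypothesis rather than extending it to other $K_k$. Everything downstream---the identity $P_1A_0=\dots=P_1A_{p-1}$, the count $|A\cap K|=qp$, the nonvanishing on $\Delta K_\infty$, and hence the spectral conclusions---rests on this unjustified step. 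A minor separate point: your relation $|A\cap K|=p\,|A\cap K_\infty|$ cannot come from Lemma~\ref{LmCount} as you suggest, since $K$ is not of the form $\langle px\rangle^{\perp_s}$ for any element $x$; the paper obtains both counts from the explicit decomposition $A_m=\bigcup_t(a_t,m)+A_{t,m}$ instead.
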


\begin{proof}
Continue with the convention in the proof of Lemma \ref{LmPL}, i.e., we assume without loss of generality that $j_0=k_0=0, k_1=\infty, k_2\in\{1,2,\ldots,p-1\}$ and $A$ contains the identity.

In this case $P_2A$ is the multiset obtained by repeating every element of $\{0\}\times\mathbb Z_{p^2}$ for $d$ times, and $P_2A_m$ is a multiset obtained by repeating every element of $\{0\}\times\{m+cp: c=0,1,\ldots,p-1\}$ for $d$ times, and \eqref{EqD} becomes
\begin{equation} \label{EqD2}
P_1A_{m}=\bigcup_{t=1}^d\{a_{m,t}+cp:c=0,1,\ldots,p-1\}\times\{0\},
\end{equation}
instead. Taking $(p, k_2p)\in \Delta K_{k_2}$ we get
$$\F{A}(p,k_2p)=p\sum_{m\in\mathbb Z_p}e^{-2\pi im/p}\sum_{t=1}^de^{2\pi ik_2a_{m_t}/p}=p\sum_{m\in\mathbb Z_p}\sum_{t=1}^de^{2\pi ik_2(a_{m_t}-mk_2^{-1})/p},$$
where $k_2^{-1}$ is the multiplicative inverse of $k_2$ in the multiplicative group modulo $p$. If we set (this is well defined as $k_2$ is fixed)
$$Q_A(z)=\sum_{m\in\mathbb Z_p}\sum_{t=1}^dz^{a_{m_t}-mk_2^{-1}},$$ 
then $\F{A}(p,k_2p)$ can be viewed as the polynomial $pQ_A(z)\in\mathbb N[z]$ evaluated at $z=e^{2\pi ik_2/p}$. Thus $\F{A}(p,k_2p)=0$ implies that $\Phi_p(z)$ divides $Q_A(z)$ (since $k_2$ is coprime to $p$), consequently $Q_A(z)$ shall actually vanish at $e^{2\pi ij/p}$ for all $j\in\{1,2,\ldots,p-1\}$, and together with
$$\Delta K_0\subseteq\Delta H_{0,0}\subseteq \Z{A},$$
we get
\begin{equation} \label{EqZAK0}
K \setminus K_{\infty}\subseteq \Z{A}.
\end{equation}
This combined with the fact that $\left(\bigcup\limits_{j\in\mathbb Z_p}E_{j,\infty}\right)\subseteq \Z{A}$ leads to
$$K_{\infty}^{\perp_s}\setminus K_{\infty}=\left(\bigcup_{j\in\mathbb Z_p}E_{j,\infty}\right)\cup(K\setminus K_{\infty})\subseteq \Z{A},$$
every element of $K_{\infty}^{\perp_s}\setminus K_{\infty}$ can be written as $(jp,k)$ for some $j\in\mathbb Z_p\setminus\{0\}$ and some $k\in\mathbb Z_{p^2}$, thus
\begin{equation} \label{EqU2}
0=\F{A}(jp,k)=\sum_{m\in\mathbb Z_p}e^{-2\pi ijm/p}\sum_{(a,b)\in A_m}e^{2\pi ia k/p^2},
\end{equation}
holds for all $j\in\mathbb Z_p\setminus\{0\}$ and all $k\in\mathbb Z_{p^2}$. If we set 
$$Q_m(z)=\sum_{(a,b)\in A_m}z^a,$$
then using the same arguments as those right below equation \eqref{EqU}, we consider $u,w^{(j)}\in\mathbb C^p$ whose $m$-th coordinates are respectively
$$u_m=\sum_{(a,b)\in A_m}e^{2\pi iak/p^2}, \quad w_m^{(j)}=e^{2\pi jm/p},$$
then $\{w^{(j)}\}_{j\in\mathbb Z_p}$ is the orthogonal Fourier basis in $\mathbb C^p$, and thus \eqref{EqU2} vanishes for all $j\neq0$ implies that $u$ is orthogonal to $u^{(0)}=(1,\ldots, 1)^T$, hence we get
$$Q_0(e^{2\pi ik/p^2})=\ldots=Q_{p-1}(e^{2\pi ik/p^2}), \quad \forall k\in\mathbb Z_{p^2}.$$
But each $Q_m(x)$ is a polynomial of $x$ of degree at most $p^2-1$, and they coincide on $p^2$ number of distinct points, hence they are all equal, which by the construction of these $Q_m$ means that
$$P_1A_0=\ldots=P_1A_{p-1}.$$
This further indicates that for terms $a_{m,t}$ defined in \eqref{EqD2}, actually by arranging their ordering properly we can get
$$a_{0,t}=a_{1,t}=\ldots,=a_{p-1,t},$$
holds for all $t=1,\ldots,d$. Therefore let us for convenience just write them as $a_1,\ldots,a_d$ instead, then we can rewrite $A_m$ into
$$A_m=\bigcup_{t=1}^d(a_t,m)+A_{t,m},$$
with each $A_{t,m}$ being a subset of $K$, of size $p$, and 
\begin{equation} \label{EqATM}
P_1A_{t,m}=\{0,p,2p,\ldots, (p-1)p\}\times\{0\}=K_0.
\end{equation}
We may further assume that $a_1,\ldots, a_d$ are arranged in such a way that the first $q$ terms are $0$ and the rest $d-q$ terms are non-zero. Notice that $q\ge 1$ must hold since $(0,0)\in A$ is assumed. Now we can have a better look at the structure of $A$: The first observation is that (since $a_1=\ldots=a_q=0$)
\begin{equation} \label{EqKPS}
|A\cap K_{\infty}^{\perp_s}|=\left|\bigcup_{m\in\mathbb Z_p}\bigcup_{t=1}^q(a_t,m)+A_{t,m}\right|=qp^2.
\end{equation}
Similarly, by \eqref{EqATM} we see that $A\cap K_{\infty}$ is obtained by taking $t=1,\ldots,q$, and picking the unique element in $A_{t,0}$ whose first coordinate is $0$, therefore
\begin{equation} \label{EqHKInf}
|A\cap K_{\infty}|=q.
\end{equation}
And also $A\cap K$ is obtained by taking $t$ from $1$ to $q$ and $m=0$, i.e.,
\begin{equation} \label{EqHK}
|A\cap K|=\left|\bigcup_{t=1}^q(a_t,0)+A_{t,0}\right|=qp.
\end{equation}
Recall that $\left(\bigcup\limits_{j\in\mathbb Z_p}E_{j,\infty}\right)\subseteq \Z{A}$, hence for each $j\in\mathbb Z_p$ applying Lemma \ref{LmCount} on $h_{j,\infty}\in \Z{A}$ we get
$$qp^2=|A\cap K_{\infty}^{\perp_s}|=|A\cap \bra{ph_{j,\infty}}^{\perp_s}|=p|A\cap \bra{h_{j,\infty}}^{\perp_s}|=p|A\cap H_{j,\infty}|.$$
Thus 
\begin{equation} \label{EqHJInf0}
|A\cap H_{j,\infty}|=qp,
\end{equation}
combined with \eqref{EqHKInf} we get that
$$|A\cap  E_{j,\infty}|=|A\cap H_{j,\infty}|-|A\cap K_{\infty}|=qp-q>0,$$
which verifies \eqref{EqHJInf}.

Next if we plug in an arbitrary non-zero element $(jp,kp)\in K$ into $\F{A}$ we get
$$\F{A}(jp,kp)=\sum_{m\in\mathbb Z_p}\sum_{t=1}^de^{2\pi i(a_tk-mj)/p}=\Phi_p(e^{-2\pi ij/p})\sum_{t=1}^de^{2\pi ia_tk/p},$$
which must be $0$ if $j\neq 0$ and non-zero otherwise ($\sum_{t=1}^de^{2\pi ia_tk/p}$ is never $0$ because it has less than $p$ terms). Therefore we can improve \eqref{EqZAK0} to
$$K\cap \Z{A}=K\setminus K_{\infty},$$
which is \eqref{EqZAK}. 

Finally if we apply Lemma \ref{LmCount} on $\Delta K_k$ for each $k\in\mathbb Z_p$, i.e., we take $x_k\in\Delta K_k$, then we get that
$$dp^2=|A|=|A\cap\bra{px_k}^{\perp_s}|=p|A\cap\bra{x}^{\perp_s}|=p|A\cap K_k^{\perp_s}|.$$
dividing $p$ out from both sides gives \eqref{EqKKP}.

Now if $A$ is spectral, then by Lemma \ref{LmSEquiv} it has a symplectic spectrum $S$, and we may without loss of generality (by shifting $S$, which does not change its difference set) assume that $S$ contains the identity. If $\Delta A\cap \Delta K_k\neq\emptyset$ for all $k\in\{0,1,\ldots,p-1,\infty\}$, then together with \eqref{EqHJInf} and Lemma \ref{LmGen} we would have
$$\Delta K_{\infty}^{\perp_s}\subseteq\Z{S},$$
which contradicts Lemma \ref{LmUncertainty} \ref{U1}, thus there must be some $k^*\in \{0,1,\ldots,p-1,\infty\}$ with
$$\Delta A\cap \Delta K_{k^*}=\emptyset.$$
Consequently $(A\cap K)\oplus K_{k^*}$ is a well defined subset of $K$, thus $|A\cap K|\le p$ must hold and therefore by \eqref{EqHK} we have $q=1$. Plugging this back into \eqref{EqKPS}, \eqref{EqHKInf}, \eqref{EqHK},  \eqref{EqHJInf0} leads to \eqref{EqCountings}.

Finally if $ E_{j,k}\subseteq Z(\widehat{\mathbf 1}_A^{sym})$, then applying Lemma \ref{LmCount} on $h_{j,k}$, and combining it with \eqref{EqKKP} we obtain
$$dp=|A\cap K_k^{\perp_s}|=|A\cap\bra{ph_{j,k}}^{\perp_s}|=p|A\cap\bra{h_{j,k}}^{\perp_s}|=p|A\cap H_{j,k}|,$$
dividing $p$ out from both sides gives \eqref{EqHJK}.
\end{proof}

\begin{Lm} \label{LmPL3} 
Let $A\subset\G{p^2}$ with $|A|=dp^2$ for some $d\in\{2, 3, \ldots, p-1\}$, if there are distinct $k_0,k_1,k_2\in\{0,1,\ldots,p-1,\infty\}$ and some $j_0\in\mathbb Z_p$ such that
$$\Delta H_{j_0,k_0}\subseteq \Z{A}, \quad \left(\bigcup_{j\in\mathbb Z_p}E_{j,k_1}\right)\subseteq \Z{A}, \quad  \Delta K_{k_2}\subseteq \Z{A},$$ 
then $A$ is not spectral.
\end{Lm}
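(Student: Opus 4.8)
The plan is to argue by contradiction: assume $A$ is spectral, fix a symplectic spectrum $S$ of it, and use the rigidity forced by Lemma \ref{LmPL2} together with the symmetry $\Delta S\subseteq Z(\widehat{\mathbf 1}_A^{sym})$, $\Delta A\subseteq Z(\widehat{\mathbf 1}_S^{sym})$ of symplectic spectral pairs. Note that $|S|=|A|=dp^2$ and $1<d<p$.

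First I would apply Lemma \ref{LmPL2} --- its hypotheses are exactly the ones assumed here --- with the normalization used in its proof: a symplectomorphism and a translation putting $j_0=k_0=0$, $k_1=\infty$, $k_2\in\{1,\dots,p-1\}$ and $(0,0)\in A$. This yields the layered description
\[
A=\bigcup_{m\in\mathbb Z_p}\ \bigcup_{t=1}^{d}\ \bigl\{(a_t+cp,\ m+g_{t,m}(c)p):c\in\mathbb Z_p\bigr\},
\]
with $a_1=0$, $a_2,\dots,a_d\in\mathbb Z_p\setminus\{0\}$ and auxiliary functions $g_{t,m}\colon\mathbb Z_p\to\mathbb Z_p$, together with \eqref{EqZAK}, \eqref{EqHJInf}, \eqref{EqKKP}, \eqref{EqCountings}, \eqref{EqHJK}; in particular $|A\cap K|=p$, $|A\cap K_\infty|=1$, $|A\cap H_{j,\infty}|=p$ for every $j$, $|A\cap K_\infty^{\perp_s}|=p^2$, and $Z(\widehat{\mathbf 1}_A^{sym})\cap K_\infty^{\perp_s}=K_\infty^{\perp_s}\setminus K_\infty$.

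The core of the argument is to transfer this to $S$, which is spectral with spectrum $A$. Reading $\Delta A$ off from the layered form: two $A$-points lying in layers sharing the same $t$ but with different second-coordinate residues $m\neq m'$ (and suitably chosen $c$'s) produce a generator of every $H_{j,\infty}$, while two $A$-points of a single layer with $c\neq c'$ produce a nonzero element of $K\setminus K_\infty$; hence by Lemma \ref{LmGen} one gets $\bigl(\bigcup_{j}E_{j,\infty}\bigr)\subseteq Z(\widehat{\mathbf 1}_S^{sym})$ and $\Delta K_{k'}\subseteq Z(\widehat{\mathbf 1}_S^{sym})$ for some class $k'\neq\infty$, and Lemma \ref{LmCount} applied to $S$ at a generator of $K_{k'}$ gives $|S\cap K_{k'}^{\perp_s}|=|S|/p=dp$. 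I would then exhibit a full $\Delta H_{j_0',k_0'}$ inside $\Delta A$ for some third class $k_0'\notin\{\infty,k'\}$, so that Lemma \ref{LmPL2} also applies to $S$ and returns $|S\cap K_\infty^{\perp_s}|=p^2$, $|S\cap K_\infty|=1$, $|S\cap H_{j,\infty}|=p$, and $|S\cap K_k^{\perp_s}|=dp$ for every $k\neq\infty$. Feeding these back into the counting identities --- and, in the sub-case where two of the $a_t$ coincide, noting in addition that then $\Delta A\cap(K_\infty\setminus\{0\})\neq\emptyset$, so $\Delta K_\infty\subseteq Z(\widehat{\mathbf 1}_S^{sym})$ and Lemma \ref{LmCount} at a generator of $K_\infty$ gives $|S\cap K_\infty^{\perp_s}|=dp$, already incompatible with $|S\cap K_\infty^{\perp_s}|=p^2$ since $d<p$ --- over-determines the numbers $|S\cap K_k^{\perp_s}|$ and $|S\cap H_{j,k}|$ and contradicts the total being $|S|=dp^2$ precisely because $d\le p-1$.

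I expect the main obstacle to be exhibiting that full $\Delta H_{j_0',k_0'}$ inside $\Delta A$ in a third class distinct from $\infty$ and $k'$: this is genuinely a statement about the auxiliary functions $g_{t,m}$, and proving it --- or, equivalently, computing $\widehat{\mathbf 1}_A^{sym}$ on the order-$p^2$ generators $h_{j,k}$, which after the two-level base-$p$ expansion reduces to sums of fewer than $p$ roots of unity weighted by inner Gauss-type character sums over $\mathbb Z_p$, and deciding when these vanish --- is where the bookkeeping becomes heavy; carrying this out while simultaneously juggling the several intersection counts for $A$ and $S$ is exactly the "technical and tedious" content the paper warns about. A secondary point of care is to make sure the concluding numerical inequality is broken by $d<p$ and not merely by $d\le p$, so that the hypothesis $d\in\{2,\dots,p-1\}$ --- which excludes the case $d=1$ already covered by Lemma \ref{LmPL} --- is used essentially.
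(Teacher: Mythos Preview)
Your overall architecture --- apply Lemma \ref{LmPL2} to $A$, transfer the three hypotheses to $S$ via $\Delta A\subseteq Z(\widehat{\mathbf 1}_S^{sym})$, apply Lemma \ref{LmPL2} to $S$, and extract a numerical contradiction --- is exactly the paper's route. Two pieces of your sketch, however, diverge from the paper in ways that matter.

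\textbf{The step you call the ``main obstacle'' is much easier than you think, and the paper does not compute $\widehat{\mathbf 1}_A^{sym}$ on generators at all.} The paper first observes that for every $k\in\mathbb Z_p$ there must be some $E_{j_k,k}\subseteq Z(\widehat{\mathbf 1}_A^{sym})$: otherwise $\Delta S$ would avoid all of $\bigcup_jE_{j,k}$ and (by \eqref{EqZAK}) all of $\Delta K_\infty$, and Lemma \ref{LmP2}\ref{P2ST} would force $|S|=p^2<dp^2$. Then \eqref{EqHJK} gives $|A\cap K_k|\le|A\cap H_{j_k,k}|=d$ for each $k\in\mathbb Z_p$; since $|A\cap K|=p>d$ by \eqref{EqCountings}, at least two classes $K_{k_0'},K_{k_2'}$ with $k_0',k_2'\in\mathbb Z_p$ meet $A$ nontrivially. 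Finally $|A\cap K_{k_0'}^{\perp_s}|=dp>p=|A\cap K|$ by \eqref{EqKKP}, so $A$ meets some $E_{j,k_0'}$; together with $(0,0)\in A$ this yields $\Delta H_{j,k_0'}\cup\Delta K_{k_2'}\cup\bigl(\bigcup_jE_{j,\infty}\bigr)\subseteq Z(\widehat{\mathbf 1}_S^{sym})$. No Gauss-type sums or analysis of the functions $g_{t,m}$ is required.

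\textbf{Your endgame is too vague, and the actual contradiction is not a ``total $=dp^2$'' mismatch.} After Lemma \ref{LmPL2} applies to $S$ (again with $k_1=\infty$), the paper still has real work to do. It first rules out $\Delta S\cap\Delta K_{k^*}=\emptyset$ (and symmetrically for $A$) for any $k^*\in\mathbb Z_p$: such a $k^*$ would force $\bigcup_jE_{j,k^*}\subseteq Z(\widehat{\mathbf 1}_A^{sym})$, and a \emph{second} application of Lemma \ref{LmPL2} to $A$ with this new $k_1=k^*$ would give $|A\cap K_{k^*}^{\perp_s}|=p^2$, contradicting the value $dp$ from \eqref{EqKKP}. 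Then for each fixed $k\in\mathbb Z_p$ one shifts so that $|A\cap K_k|\ge2$; if every $E_{j,k}$ met $\Delta S$ then $|A\cap E_{j,k}|\le d-2$ for all $j$ and summing contradicts $|A\cap K_k^{\perp_s}|=dp$. Hence some $E_{j_k,k}$ misses $\Delta S$, which (after one more use of \eqref{EqHJK}, now for $S$) pins down $|S\cap K_k|=d$ for every $k\in\mathbb Z_p$. The final contradiction is then
\[
p=|S\cap K|=|S\cap K_\infty|+\sum_{k\in\mathbb Z_p}|S\cap\Delta K_k|=1+p(d-1)\ge p+1,
\]
using $d\ge2$. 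This is the step where the specific range $2\le d\le p-1$ is used, and it is not captured by a loose ``over-determination'' of intersection counts.
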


\begin{proof}
Continue with the convention in the proof of Lemma \ref{LmPL} and Lemma \ref{LmPL2}, so w.l.o.g. $j_0=k_0=0,k_1=\infty$, $S$ is a symplectic spectrum of $A$, and both $A,S$ contain the identity. 

If there is some $k\in\mathbb Z_p$ such that $\left(\bigcup\limits_{j\in\mathbb Z_p}E_{j,k}\right)\cap \Z{A}=\emptyset$, then since \eqref{EqZAK} also indicates that $\Delta K_{\infty}\cap \Z{A}=\emptyset$, together we would have
$$\left(\bigcup_{j\in\mathbb Z_p}E_{j,k}\right)\cap\Delta S=\Delta K_{\infty}\cap\Delta S=\emptyset.$$
It then follows from Lemma \ref{LmP2} \ref{P2ST} that $|S|=p^2$, which is a contradiction since we have assumed $|A|=|S|=dp^2$ and $d\ge 2$. 

Hence for each $k\in\mathbb Z_p$, there should be some $j_k$ such that $ E_{j_k,k}\cap \Z{S}\neq\emptyset$, which by Lemma \ref{LmGen} means
\begin{equation} \label{EqZHJK}
 E_{j_k,k}\subseteq \Z{A},
\end{equation} 
then \eqref{EqHJK} of Lemma \ref{LmPL2} shows $|A\cap H_{j_k,k}|=d$, which further implies that
\begin{equation} \label{EqHJK2}
|A\cap K_k|\le |A\cap H_{j_k,k}|=d.
\end{equation}
Recall also by \eqref{EqCountings} that we have $|A\cap K|=p>d$, consequently $A\cap K$ can not be completely contained in one $K_k$, thus there should at least be some distinct $k_0',k_2'\in\mathbb Z_p$ such that both $A\cap\Delta K_{k_0'}$ and $A\cap\Delta K_{k_2'}$ are non-empty. Since $A$ contains the identity, this further implies that 
\begin{equation} \label{EqPL3-1}
\Delta A\cap\Delta K_{k_0'}\neq\emptyset, \quad \Delta A\cap\Delta K_{k_2'}\neq\emptyset.
\end{equation}
By the same reasoning that leads to \eqref{EqZHJK}, there is also some $j_{k_0'}\in\mathbb Z_p$ such that 
\begin{equation} \label{EqPL3-2}
E_{j_{k_0'},k_0'}\subseteq \Z{S}.
\end{equation}
And again since $A$ contains the identity, \eqref{EqHJInf} also implies that
\begin{equation} \label{EqPL3-3}
E_{j,\infty}\cap\Delta A\neq\emptyset, \quad \forall j\in\mathbb Z_p.
\end{equation}
\eqref{EqPL3-1},\eqref{EqPL3-2},\eqref{EqPL3-3} together produce
$$\left(\bigcup_{j\in\mathbb Z_p}E_{j,\infty}\right)\cup\Delta H_{j_{k_0'},k_0'}\cup\Delta K_{k_2'}\subseteq Z(\widehat{\mathbf 1}_S^{sym}).$$
Therefore taking $k_0',\infty, k_2'$ to be $k_0, k_1, k_2$ as in the statement of Lemma \ref{LmPL2}, and then apply Lemma \ref{LmPL2} on $S$, we see that \eqref{EqZAK} to \eqref{EqHJK} also hold for $S$ (i.e., with $A$ replaced by $S$ in these equations). So now we investigate $S$: 

First let us be convinced that for every $k\in\mathbb Z_p$ (i.e., $k\neq\infty$) we shall have
\begin{equation} \label{EqDAK}
\Delta S\cap\Delta K_k\neq\emptyset, \quad \Delta A\cap\Delta K_k\neq\emptyset.
\end{equation}
If not, then let $k^*\in\mathbb Z_p$ satisfy $\Delta S\cap\Delta K_{k^*}=\emptyset$, for each $j\in\mathbb Z_p$ we will have 
\begin{equation} \label{EqJKStar}
\Delta S\cap E_{j,k^*}\neq\emptyset.
\end{equation}
Otherwise if $\Delta S\cap E_{j,k^*}=\emptyset$ for some $j$, then we would have $\Delta S\cap\Delta H_{j,k^*}=\emptyset$, which makes $(S,H_{j,k^*})$ a tiling pair and contradicts the assumption that $|S|=|A|=dp^2>p^2$. Therefore by \eqref{EqJKStar} (combined with \eqref{EqAnn} and Lemma \ref{LmGen}) we shall have
$$\bigcup_{j\in\mathbb Z_p}E_{j,k^*}\subseteq \Z{A}.$$

Combing \eqref{EqZAK} and \eqref{EqZHJK}, we see that if $k_0'',k_2''\in\mathbb Z_p\setminus\{k^*\}$ are distinct, then there exists some $j_{k_0''}\in\mathbb Z_p$ such that both $\Delta H_{j_{k_0''},k_0''}$ and $\Delta K_{k_2''}$ are contained in $\Z{A}$. Therefore applying Lemma \ref{LmPL2} again but taking $k_0'', k^*, k_2''$ as $k_0,k_1,k_2$ instead\footnote{of course this also requires $p>2$, which is actually silently assumed in the statement of the Lemma \ref{LmPL2} and Lemma \ref{LmPL3}, since if $p=2$ then there is no natural number $d$ that can satisfy $2=p>d>1$.}, in this way \eqref{EqCountings} produces
$$|A\cap K_{k^*}^{\perp_s}|=p^2,$$
which contradicts the conclusion of \eqref{EqKKP} that $|A\cap K_{k^*}^{\perp_s}|=pd<p^2$. Swapping $A,S$ and repeating the same arguments will show that there can be no $k^*\in\mathbb Z_p$ that satisfies $\Delta A\cap\Delta K_{k^*}=\emptyset$ either. Therefore \eqref{EqDAK} must be true. 

Now take an arbitrary $k\in\mathbb Z_p$, we may assume (by a proper shift since $\Delta A\cap\Delta K_{k}\neq\emptyset$) that $|A\cap K_{k}|\ge 2$. If $\Delta S\cap E_{j,k}\neq\emptyset$ for all $j\in\mathbb Z_p$, then by \eqref{EqHJK} we would have $|A\cap H_{j,k}|=d$ for each $j\in\mathbb Z_p$, which further implies that
$$|A\cap E_{j,k}|=|A\cap H_{j,k}|-|A\cap K_k|\le d-2,$$
consequently by \eqref{EqKKP} and \eqref{EqCountings} we get
$$pd=|A\cap K_{k}^{\perp_s}|=|A\cap K|+\sum_{j\in\mathbb Z_p}|A\cap E_{j,k}|\le p+p(d-2)=pd-p<pd,$$
which is a contradiction. Hence there must exist some $j_k\in\mathbb Z_p$ such that $\Delta S\cap E_{j_k,k}=\emptyset$, which in turn allows $|A\cap H_{j_k,k}|$ to be larger than $d$. By \eqref{EqHJK2} we also have $|A\cap K_k|\le d$, therefore together we get
$$|A\cap E_{j_k,k}|=|A\cap H_{j,k}|-|A\cap K_{k}|>0,$$
This along with the fact that $A$ contains the identity imply that $|\Delta A\cap E_{j_k,k}|>0$. By \eqref{EqAnn} this means $\Delta A\cap E_{j_k,k}\subseteq\Z{S}$, which by Lemma \ref{LmGen} further indicates that
$$E_{j_k,k}\subseteq \Z{S}.$$
By \eqref{EqHJK} this leads to
$$|S\cap H_{j_k,k}|=d.$$
Recall that $j_k$ is chosen in such a way that $\Delta S\cap E_{j_k,k}=\emptyset$, as $S$ contains the identity this further implies that $S\cap E_{j_k,k}=\emptyset$, therefore we get
$$|S\cap K_{k}|=|S\cap H_{j_k,k}|-|S\cap E_{j_k,k}|=d.$$
Such a chain of arguments holds for each $k\in\mathbb Z_p$ (despite that we may need to shift $A$ as mentioned at the beginning of this paragraph, this is because any shift of $A$ is still a symplectic spectrum of $S$, therefore $|S\cap K_{k}|=d$ holds regardlessly), but then recalling \eqref{EqCountings} on $S$ we would have
$$p=|S\cap K|=|S\cap K_{\infty}|+\sum_{k\in\mathbb Z_p}|S\cap\Delta K_k|=1+p(d-1)=pd-p+1\ge 2p-p+1>p,$$
which is still a contradiction. These exhaust all possibilities and indicate that no such $A$ and $S$ can exist. 
\end{proof}

\begin{Lm} \label{LmSTp}
If $A\subset\G{p^2}$ is a spectral set of size $p$, then it is also a tile.
\end{Lm}

\begin{proof}
By Lemma \ref{LmSEquiv}, $A$ is also symplectic spectral set, thus $\Z{A}$ must be non-empty by Lemma \ref{LmBasic}. By Lemma \ref{LmGen} this means that at least one of the non-trivial equivalence classes must be completely contained in it, by using a symplectic transformation we may without loss of generality assume it is either $E_{0,0}$ or $K_0$.

In the first case we set $B=H_{0,0}\oplus K_{0,\infty}$, while in the second case we set $B=H_{0,0}\oplus C_{0,\infty}$, then by Lemma \ref{LmZ} in both cases we have
$$\Delta(\G{p^2})\subseteq Z(\F{A}\cdot\F{B}),$$
which by \eqref{EqDecomp} means that $(A,B)$ is a tiling pair.
\end{proof}

\section{Main results}
\begin{Thm} \label{ThmMain} 
Let $p$ be a prime number, then any subset $A\subseteq\G{p^2}$ is spectral if and only if it is a tile.
\end{Thm}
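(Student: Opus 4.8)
The plan is to prove Theorem~\ref{ThmMain} by working throughout with symplectic spectral pairs (the $90^{\circ}$ rotation making this harmless) and treating the statement one cardinality at a time. A tile of $\mathbb Z_{p^2}\times\mathbb Z_{p^2}$ has size dividing $p^4$, while a symplectic spectral set has size divisible by $p$ (Lemma~\ref{LmBasic}); the extreme cases $|A|=1$ and $|A|=p^4$ are trivially both a tile and spectral. For $|A|\ge p^3$ no new work is needed: a tile of size $p^3$ is spectral by Lemma~\ref{LmTSq}, and a spectral set with $p^4>|A|\ge p^3$ is a tile (automatically of size $p^3$) by Lemma~\ref{LmSTq} --- in particular no spectral set has size strictly between $p^3$ and $p^4$. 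So it remains to treat $1<|A|<p^3$: tiles of size $p$ or $p^2$, and spectral sets whose size is a multiple of $p$ below $p^3$. Throughout I would use that symplectomorphisms and translations preserve both properties and that $Z(\widehat{\mathbf 1}_A^{sym})$ is a union of $\sim$-equivalence classes (Lemma~\ref{LmGen}).

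For the implication tile $\Rightarrow$ spectral: size $p$ is exactly Lemma~\ref{LmTSp}. For a tile $A$ of size $p^2$ with tiling complement $B$ I would dichotomize on $\Delta A\cap\Delta K$. If $\Delta A\cap\Delta K=\emptyset$ then $A$ is a tiling complement of $K$ and Lemma~\ref{LmKC} yields spectrality at once. Otherwise, from $\Delta(\mathbb Z_{p^2}\times\mathbb Z_{p^2})\subseteq Z(\widehat{\mathbf 1}_A^{sym})\cup Z(\widehat{\mathbf 1}_B^{sym})$ (each summand a union of classes), Lemma~\ref{LmDiff}, and the counting Lemma~\ref{LmCount} applied to $A$ and to the equally large tile $B$, I would locate a full $\Delta K_k$ together with a full $\bigcup_{j\in\mathbb Z_p}E_{j,k'}$ for some $k'\ne k$ inside $Z(\widehat{\mathbf 1}_A^{sym})$, and finish with Lemma~\ref{LmP2}\,\ref{P2TS}.

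For the implication spectral $\Rightarrow$ tile on $1<|A|<p^3$, fix a symplectic spectrum $S$, so $|S|=|A|$, $\Delta S\subseteq Z(\widehat{\mathbf 1}_A^{sym})$ and, symmetrically, $\Delta A\subseteq Z(\widehat{\mathbf 1}_S^{sym})$. I would first narrow down $|A|$. By Lemma~\ref{LmGen} the set $Z(\widehat{\mathbf 1}_A^{sym})$ contains a full equivalence class $E$. If $E=\Delta K_k$, Lemma~\ref{LmCount} gives $|A|=p\,|A\cap K_k^{\perp_s}|$ with $A\cap K_k^{\perp_s}$ sitting inside the order-$p^3$ subgroup $K_k^{\perp_s}$ (where Fuglede is known), which, chased, forces $|A|$ to be a power of $p$ except when $|A|=dp^2$; if $E=E_{j,k}$, Lemma~\ref{LmCount} gives $|A\cap K_k^{\perp_s}|=p\,|A\cap H_{j,k}|$ and localizes on the cyclic Lagrangian $H_{j,k}\cong\mathbb Z_{p^2}$. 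The residual sizes $|A|=dp^2$ with $2\le d\le p-1$ --- the \emph{corner} case --- are exactly what Lemmas~\ref{LmPL}, \ref{LmPL2}, \ref{LmPL3} eliminate: one shows $Z(\widehat{\mathbf 1}_A^{sym})$ must contain a configuration $\Delta H_{j_0,k_0}$, $\bigcup_j E_{j,k_1}$, $\Delta K_{k_2}$ (the spectrum cannot avoid enough directions without violating Lemma~\ref{LmUncertainty} or Lemma~\ref{LmP2}\,\ref{P2ST}), whereupon Lemma~\ref{LmPL3} says $A$ is not spectral, a contradiction. This leaves $|A|\in\{p,p^2\}$. For $|A|=p$: if $\Delta S$ lies in one order-$p$ subgroup then $S$ equals that subgroup and Lemma~\ref{LmZ}\,\ref{Z2} makes $A$ a complement of its order-$p^3$ symplectic orthogonal, hence a tile; otherwise $\Delta S$ meets some $E_{j,k}$, Lemma~\ref{LmCount} confines $A$ to an order-$p^3$ subgroup meeting the Lagrangian $H_{j,k}$ only at the origin, and one concludes inside that proper subgroup. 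For $|A|=p^2$: either $\Delta A$ meets every $\Delta K_k$, so $S$ is a tiling complement of $K$ and Lemma~\ref{LmKC} makes $(S,A)$ a tiling pair outright, or $\Delta A$ misses some $\Delta K_k$ and a counting argument via Lemma~\ref{LmCount} and Lemma~\ref{LmUncertainty} produces a $k'\ne k$ with $\bigcup_j E_{j,k'}$ also disjoint from $\Delta A$, so Lemma~\ref{LmP2}\,\ref{P2ST} applies.

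The main obstacle I expect is the size-$p^2$ analysis (in both directions) together with the exclusion of the sizes $dp^2$ with $2\le d\le p-1$: both reduce to a delicate accounting of how the $p+1$ directions of $\mathbb Z_{p^2}\times\mathbb Z_{p^2}$ distribute between the zero set and the difference set, which is precisely what the counting Lemma~\ref{LmCount} and Lemmas~\ref{LmP2}--\ref{LmPL3} are engineered to control; everything else is either a cited lemma or a short reduction.
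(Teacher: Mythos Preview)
Your overall scaffold---split by cardinality, dispatch $|A|\in\{1,p,p^3,p^4\}$ by the cited lemmas, isolate $|A|=p^2$ as the heart, and invoke Lemmas~\ref{LmPL}--\ref{LmPL3} for the ``corner'' sizes $dp^2$---matches the paper. But two steps in your plan do not go through as written.

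\textbf{The $|A|=p^2$ case needs Lemma~\ref{LmPL}, not just counting.} In the tile-to-spectral direction your ``otherwise'' branch asserts that from $\Delta A\cap\Delta K\neq\emptyset$ one can always locate $\Delta K_k$ together with a full $\bigcup_{j}E_{j,k'}$ inside $Z(\widehat{\mathbf 1}_A^{sym})$ and finish by Lemma~\ref{LmP2}\,\ref{P2TS}. The paper shows this configuration is \emph{not} always available: there is a residual subcase in which every $\Delta K_m\subseteq Z(\widehat{\mathbf 1}_A^{sym})$ has the accompanying $\bigcup_j E_{j,m}$ entirely \emph{outside} $Z(\widehat{\mathbf 1}_A^{sym})$, while every remaining direction $m'$ misses at least one $E_{j_{m'},m'}$. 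The paper disposes of this only by passing to the complement $B$, applying Lemma~\ref{LmPL} (with $d=1$) to force $\bigcup_j E_{j,m'}\subseteq Z(\widehat{\mathbf 1}_B^{sym})$ for each such $m'$, and then contradicting~\eqref{EqUncertainty}. The same issue recurs symmetrically in the spectral-to-tile direction at $|A|=p^2$: after the Lemma~\ref{LmKC} branch, the paper again needs Lemma~\ref{LmPL} on $S$ (once $|S|=p^2$ is known) to kill the parallel residual subcase; ``Lemma~\ref{LmCount} and Lemma~\ref{LmUncertainty}'' alone do not produce the $k'$ you need. In short, Lemma~\ref{LmPL} is not merely a corner-case device for $d\ge 2$; it is essential at $d=1$ in both directions, and your plan omits it there.

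\textbf{The ``narrowing down'' of $|A|$ is unjustified.} From $\Delta K_k\subseteq Z(\widehat{\mathbf 1}_A^{sym})$, Lemma~\ref{LmCount} yields only $|A|=p\,|A\cap K_k^{\perp_s}|$, which gives nothing beyond $p\mid |A|$. To invoke ``Fuglede is known'' in $K_k^{\perp_s}$ you would need $A\cap K_k^{\perp_s}$ itself to be spectral (or $A\subseteq K_k^{\perp_s}$), and you do not establish this; so the sizes $dp$ with $2\le d\le p-1$ are not excluded by your sketch. The paper does not attempt such a reduction at all: it treats the entire range $p\le |A|<p^2$ by a direct and rather long case analysis on how $\Delta A$ meets the Lagrangians, culminating in an arithmetic argument (the ``$r=1$'' step) that pins $A$ inside a single $K_{k^*}^{\perp_s}$ and exhibits a tiling complement there. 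Your proposed shortcut via the $\mathbb Z_{p^2}\times\mathbb Z_p$ result may be salvageable for $|A|=p$ once you show $A\subseteq K_k^{\perp_s}$ (which Lemma~\ref{LmCount} does give in that case), but it does not handle $p<|A|<p^2$ as stated.
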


\begin{proof}
The proof is combinatorial and is established on thorough case discussions:

\emph{\textbf{Tiling to spectral:}}

Let $(A,B)$ be a tiling pair in $\G{p^2}$, we aim to produce a symplectic spectrum explicitly for $A$, then by Lemma \ref{LmSEquiv} this will indicate that $A$ is spectral. Since shifting changes neither $\Delta A, \Z{A}$ nor $\Delta B, \Z{B}$, let us assume that both $A$ and $B$ contain the identity element. 

The cases that $|A|=1$ and $|A|=p^4$ are trivial, the cases that $|A|=p$ and $|A|=p^3$ follow from Lemma \ref{LmTSp} and Lemma \ref{LmTSq} respectively. Hence let us focus on the case that $|A|=p^2$: 

If $\Delta K\subseteq \Z{A}$, then obviously $(A,K)$ is a symplectic spectral pair. 

Alternatively if $\Delta K\cap \Z{A}=\emptyset$, then we will have $\Delta K\subseteq \Z{B}$ by \eqref{EqDecomp}, which means that $B$ is a tiling complement of $K$ in $\G{p^2}$, then $(A, B)$ is a symplectic spectral pair by Lemma \ref{LmKC}.

Now we are left with the case that $\Delta K\cap \Z{A}\neq\emptyset$, but $\Delta K$ is not completely contained in $\Z{A}$ either. Recall that
$$\Delta K=\bigcup_{k\in\{0, 1, \ldots, p-1, \infty\}}\Delta K_k,$$
and by Lemma \ref{LmGen} each $\Delta K_k$ is either completely in $\Z{A}$ or completely disjoint with $\Z{A}$, we can thus partition the set $\{0, 1, \ldots, p-1, \infty\}$ into two parts $M$ and $M'$, so that
$$\tilde M=\{m:\Delta K_m\subseteq \Z{A}\}, \quad  \tilde M'=\{m':\Delta K_{m'}\cap \Z{A}=\emptyset\},$$
and they are both non-empty. Now there are three further subcases:

If there is some $m\in \tilde M$ and some $j\in\mathbb Z_p$ such that $E_{j,m}\cap \Z{A}\neq\emptyset$, then $\Delta H_{j,k}\subseteq \Z{A}$ by Lemma \ref{LmGen}, and thus $(A,H_{j,m})$ is a symplectic spectral pair. 

Alternatively if there is some $m'\in \tilde M'$ so that $ E_{j',m'}\cap Z(\widehat{\mathbf 1}_A^{sym})\neq\emptyset$ for all $j'\in\mathbb Z_p$, then $A$ is symplectic spectral by Lemma \ref{LmP2} \ref{P2TS}. 

Hence the final subcase is that for each $m\in \tilde M$ we have
$$\left(\bigcup_{j\in\mathbb Z_p}E_{j,m}\right)\cap \Z{A}=\emptyset,$$
while for each $m'\in \tilde M'$ there is at least one $j_{m'}\in\mathbb Z_p$ so that
$$ E_{j_{m'},m'}\cap \Z{A}=\emptyset.$$ 
By \eqref{EqDecomp} and Lemma \ref{LmGen}, we shall have 
$$\left(\bigcup_{j\in\mathbb Z_p}E_{j,m}\right)\cup\Delta H_{j_{m'},m'}\subseteq \Z{B},$$
which by Lemma \ref{LmPL} further indicates that $\left(\bigcup\limits_{j\in\mathbb Z_p}E_{j,m'}\right)\subseteq \Z{B}$. Applying this argument on each $m'\in \tilde M'$ we get
\begin{equation} \label{EqPL2}
\left(\bigcup_{m'\in M'}\Delta K_{m'}\right)\bigcup\left(\bigcup_{m'\in M'}\bigcup\limits_{j\in\mathbb Z_p}E_{j,m'}\right)=(\G{p^2})\setminus \bigcup_{m\in M}K_m\subseteq \Z{B}.
\end{equation}
which means 
$$|\supp(\F{B})|<|\bigcup_{m\in M}K_m|<|K|<p^2,$$
and thus contradicts \eqref{EqUncertainty}. These exhaust all possibilities of the tiling to spectral cases. \\

\emph{\textbf{Spectral to tiling:}} 

Suppose that $A$ is spectral, by Lemma \ref{LmSEquiv} this means that $A$ is also symplectic spectral. Let $S\subseteq\G{p^2}$ be a symplectic spectrum of $A$. We aim to produce a tiling complement of $A$ in $\G{p^2}$. Since shifting changes neither $\Delta A, \Z{A}$ nor $\Delta S, \Z{S}$, let us assume that both $A$ and $S$ contain the identity element. 

The cases that $|A|=1$ and $|A|=p^4$ are trivial, the case that $p^4>|A|\ge p^3$ is established in Lemma \ref{LmSTq}, then there are two more cases to be checked:
\begin{itemize}[leftmargin=*]
\item $p^3>|A|\ge p^2$:

\textbf{(1)} If there is some $B$ with $\Delta A\cap\Delta B=\emptyset$, and $B$ is an order $p^2$ subgroup or a tiling complement of some subgroup, then $|B|=p^2$, and Lemma \ref{LmDelta} indicates that $(A,B)$ is a tiling pair.

\textbf{(2)} If there is no such $B$ as in (1), then $\Delta A$ shall intersect each order $p^2$ subgroup non-trivially, and at the same time not completely contained in any order $p^2$ subgroup (otherwise it equals that subgroup and we are back in the last case). Then we shall look at how $\Delta A$ intersects $\Delta K$ and how it intersects each equivalence classes. Let us partition the set $\{0,1,\ldots,p-1,\infty\}$ into $M$ and $M'$, so that
$$M=\{m: \Delta A\cap\Delta K_m\neq \emptyset\}, \quad M'=\{m': \Delta A\cap\Delta K_{m'}=\emptyset\}.$$
Notice also that, because $\Delta A$ needs to intersect each maximal cyclic subgroup $H_{j,m'}$ non-trivially, for each $m'\in M'$ (if $M'$ is non-empty) we must also have
\begin{equation} \label{EqSTP2M2}
\Delta A\cap E_{j,m'}\neq\emptyset,\quad \forall j\in\mathbb Z_p.
\end{equation}
Let us discuss the size of $M'$:

\textbf{(2.1)} If $|M'|\ge 1$, then we further look at the size of $M$: Clearly $|M|\ge 1$ must also hold, since at the beginning of case (2) we have assumed that $\Delta A$ must intersect the subgroup $K$ non-trivially, and recall that
$$K=\{(0,0)\}\cup\Delta K_0\cup\ldots\cup\Delta K_{p-1}\cup \Delta K_{\infty},$$
hence $\Delta A$ must intersect at least one of  $\Delta K_0,\ldots,\Delta K_{p-1},\Delta K_{\infty}$.

Next since $|M|+|M'|=p+1$, we shall also have 
$$|M|\le  p+1-|M'|\le p.$$
Moreover, if there is some $m\in M$ such that
\begin{equation} \label{EqSTP2M3}
\left(\bigcup_{j\in\mathbb Z_p}E_{j,m}\right)\cap\Delta A=\emptyset,
\end{equation}
then Lemma \ref{LmP2} \ref{P2ST} (with $m$ playing the role of $k'$ and any $m'\in M'$ playing the role of $k$) already shows that $A$ is a tile with $|A|=p^2$. 

Hence let us further handle the case that for each $m\in M$ there is some $j_m\in\mathbb Z_p$ such that
\begin{equation} \label{EqSTP2M3}
\Delta A\cap E_{j_m,m}\neq\emptyset.
\end{equation}
\textbf{(2.1.1)} If $|M|=1$, say $M=\{0\}$ and $j_0=0$ without loss of generality (by using a symplectic transformation), then \eqref{EqSTP2M2} means each component in the right-hand side of
 $$(\G{p^2})\setminus K_0^{\perp_s}=\bigcup_{m'\in M'}\bigcup_{j\in\mathbb Z_p}E_{j,m'},$$
intersects $\Delta A$ non-trivially. And \eqref{EqSTP2M3} means each component in the right-hand side of
$$\Delta H_{0,0}=E_{0,0}\cup \Delta K_0,$$
also intersects $\Delta A$ non-trivially.  By \eqref{EqAnn} we have $\Delta A\subseteq \Z{S}$, while by Lemma \ref{LmGen} this means all the equivalence classes that intersects $\Delta A$ non-trivially are annihilated by $\F{S}$, i.e.,
$$\left((\G{p^2})\setminus K_0^{\perp_s}\right)\bigcup \Delta H_{0,0}=(\G{p^2})\setminus\left(K_0^{\perp_s}\setminus\Delta H_{0,0}\right)\subseteq \Z{S}.$$
On the other hand, it is also easy to verify that
$$K_0^{\perp_s}\setminus H_{0,0}=\Z{C_{0,0}},$$
hence together we will have 
\begin{equation} \label{EqSTP2}
\Delta(\G{p^2})\subseteq Z(\F{S}\cdot \F{C_{0,0}}).
\end{equation}
Now since $C_{0,0}=\{(0,0),(1,0),\ldots,(p-1,0)\}$ has size $p$, by Lemma \ref{LmUncertainty} \ref{U2} this indicates that 
$$|S|\ge\frac{|\G{p^2}|}{|C_{0,0}|}=p^3,$$
which is a contradiction to the assumption that $p^3>|A|$. 

\textbf{(2.1.2)} If $|M|\ge 2$, then by \eqref{EqSTP2M3} we shall have distinct $k_0,k_1,k_2$ such that $k_0,k_2\in M$ and $k_1\in M'$ with
$$\Delta H_{j_{k_0},k_0}\bigcup \Delta K_{k_2}\bigcup \left(\bigcup_{j\in\mathbb Z_p}E_{j,k_1}\right)\subseteq \Z{S},$$
in this case Lemma \ref{LmPL3} ruled out the possibility of $|S|>p^2$. Hence we must have $|S|=p^2$, but then applying Lemma \ref{LmPL} on each $m\in M$, we will actually have 
$$\bigcup_{j\in\mathbb Z_p}E_{j,m}\subseteq \Z{S},$$ 
which combined with \eqref{EqSTP2M2}, \eqref{EqAnn} and Lemma \ref{LmGen} gives
$$(\G{p^2})\setminus \bigcup_{m'\in M'}K_{m'}\subseteq \Z{S}.$$
Now we can take an arbitrary $C_{j,m}$ with $j\in\mathbb Z_p, m\in M$, since 
$$\bigcup_{m'\in M'}K_{m'}\setminus\{(0,0)\}\subseteq K_m^{\perp_s}\setminus H_{j,m}=\Z{C_{j,m}},$$
we get
$$\Delta(\G{p^2})\subseteq Z(\F{S}\cdot \F{C_{j,m}}),$$
which similar as in \eqref{EqSTP2} means $|S|\ge p^3$, and is thus a contradiction.

\textbf{(2.2)} If $|M'|=0$, then $|M|=p+1$, i.e., $\Delta A\cap\Delta K_k\neq\emptyset$ holds for all $k\in\{0,1,\ldots,p-1,\infty\}$, by Lemma \ref{LmGen} this implies that 
\begin{equation} \label{EqSTP2_0}
\Delta K\subseteq \Z{S}.
\end{equation} 

\textbf{(2.2.1)} If $|S|=p^2$, then \eqref{EqSTP2_0} together with \eqref{EqAnn} indicates that $S$ is a tiling complement of $K$, consequently by Lemma \ref{LmKC} we can conclude that $(A,S)$ is actually also a tiling pair.

\textbf{(2.2.2)} If $|S|>p^2$, then $S$ will not have the structures described in case (1) and case (2.1), otherwise swapping $A,S$ and repeating arguments in case (1) and case (2.1) we will get that $S$ is a tile, then it means $|S|=p^2$, which is a contradiction. Therefore we also have that $\Delta S\cap\Delta K_k\neq\emptyset$ holds for all $k\in\{0,1,\ldots,p-1,\infty\}$, and for the same reason as \eqref{EqSTP2_0} we in turn have that
$$\Delta K\subseteq \Z{A}.$$

Moreover, for each fixed $k$, there should be some $j_k$ such that 
\begin{equation} \label{EqSTP2_1}
\Delta S\cap E_{j_k,k}=\emptyset,
\end{equation}
otherwise if $\Delta S\cap E_{j,k}\neq\emptyset$ would hold for all $j\in\mathbb Z_p$, then we would have
$$\Delta K_k^{\perp_s}=\left(\bigcup_{j\in\mathbb Z_p}E_{j,k}\right)\bigcup \Delta K\subseteq \Z{A},$$
which contradicts Lemma \ref{LmUncertainty} \ref{U1}. Now by \eqref{EqCKC1} of Lemma \ref{LmCKC} we get 
$$|A|=p^2|A\cap K|, \quad |S|=p^2|S\cap K|.$$
Since $|A|=|S|$, we also obtain $|A\cap K|=|S\cap K|$. Set 
$$b=|A\cap K|=|S\cap K|,$$
hence $|A|=p^2b$, and
$$1<b<p.$$
This indicates that there must be some $k^*\in\{0,1,\ldots,p-1,\infty\}$ such $A\cap K_{k^*}=\{(0,0)\}$, i.e.,
$$A\cap\Delta K_{k^*}=\emptyset,$$
otherwise $|A\cap K|$ will have at least $p+2$ elements (one non-trivial element from each subgroup of $K$, plus the identity). 

On the other hand, by \eqref{EqCKC2} of Lemma \ref{LmCKC} for any $k\in\{0,1,\ldots,p-1,\infty\}$ we shall have
$$|A\cap (K_k^{\perp_s}\setminus K)|=|A\cap K_k^{\perp_s}|-|A\cap K|=pb-b=(p-1)b.$$
For the same reason we also have
$$|S\cap (K_k^{\perp_s}\setminus K)|=(p-1)b.$$

Next let us show that at least for $k=k^*$ it is impossible for these two equations to hold simultaneously. To do this we think reversely and imagine that we are trying to pack $(p-1)b$ elements into each of $A\cap (K_{k^*}^{\perp_s}\setminus K)$ and $S\cap (K_{k^*}^{\perp_s}\setminus K)$ respectively. Recall that
$$K_{k^*}^{\perp_s}\setminus K=\bigcup_{j\in\mathbb Z_p}E_{j,k^*},$$
and let $j_0,\ldots,j_{p-1}$ be a sequence so that 
$$|E_{j_0,k^*}\cap S|\ge |E_{j_1,k^*}\cap S|\ge\ldots\ge|E_{j_{p-1},k^*}\cap S|.$$
Clearly $E_{j_0,k^*}\cap S\neq\emptyset$, otherwise $|S\cap (K_{k^*}^{\perp_s}\setminus K)|$ would be $0$. Now since 
$$(E_{j_0,k^*}\cap S)\subseteq\Delta S\subseteq \Z{A},$$ 
(the first inclusion holds since $S$ contains the identity), by \eqref{EqCKC3} of Lemma \ref{LmCKC} we conclude that 
$$|A\cap H_{j_0,k^*}|=b.$$
Since $A\cap\Delta K_{k^*}=\emptyset$ by how $k^*$ is chosen, we further have
$$|A\cap H_{j_0,k^*}|=|A\cap E_{j_0,k^*}|\cup\{(0,0)\}.$$
Thus 
$$|A\cap E_{j_0,k^*}|=b-1.$$
As $A\cap E_{j_0,k^*}$ is non-empty, applying \eqref{EqCKC3} of Lemma \ref{LmCKC} again, this in turn means
$$|S\cap H_{j_0,k^*}|=b,$$
hence $S\cap E_{j_0,k^*}$ is also at most $b-1$ (since the identity is not in $E_{j_0,k^*}$). Together we obtain
$$\left|A\cap\left(\bigcup_{j\in\mathbb Z_p\setminus\{j_0\}}E_{j,k^*}\right)\right|=(p-1)b-(b-1), \quad \left|S\cap\left(\bigcup_{j\in\mathbb Z_p\setminus\{j_0\}}E_{j,k^*}\right)\right|\ge (p-1)b-(b-1).$$
So now we are packing $(p-1)b-(b-1)$ elements into $A\cap\left(\bigcup_{j\in\mathbb Z_p\setminus\{j_0\}}E_{j,k^*}\right)$ and more than  $(p-1)b-(b-1)$ elements into $S\cap\left(\bigcup_{j\in\mathbb Z_p\setminus\{j_0\}}E_{j,k^*}\right)$. The procedure can be repeated inductively for at most $p-1$ times since by \eqref{EqSTP2_1} we must have 
$$|E_{j_{p-1},k^*}\cap S|=0.$$
Therefore in total this allows us to pack at most $(p-1)(b-1)$ elements into $|A\cap (K_{k^*}^{\perp_s}\setminus K)|$, which is less than the required amount of $(p-1)b$, yielding a contradiction.

\item $p^2>|A|\ge p$:  As before we partition the set $\{0,1,\ldots,p-1,\infty\}$ into $M$ and $M'$, so that
$$M=\{m: \Delta A\cap\Delta K_m\neq \emptyset\}, \quad M'=\{m': \Delta A\cap\Delta K_{m'}=\emptyset\}.$$
$M'$ is non-empty since otherwise we would have $\Delta K\subseteq\Z{S}$, which contradicts Lemma \ref{LmUncertainty} \ref{U1}. Similarly for each maximal cyclic subgroup $H_{j,k}$, $\Delta A\cap\Delta K_k\neq\emptyset$ would imply $\Delta A\cap\Delta E_{j,k}=\emptyset$ and vice versa, otherwise we would have $\Delta H_{j,k}\subseteq\Z{S}$, which again contradicts Lemma \ref{LmUncertainty} \ref{U1}. Therefore for each $j\in\mathbb Z_p$ and each $k\in\{0,1,\ldots,p-1,\infty\}$, if $\Delta A\cap\Delta H_{j,k}\neq\emptyset$, then
\begin{equation} \label{EqSTMMDic}
\text{either }\begin{cases}\Delta A\cap E_{j,k}=\emptyset, \\ \Delta A\cap \Delta K_{k}\neq\emptyset, \end{cases} \text{ or }    \begin{cases}\Delta A\cap E_{j,k}\neq\emptyset, \\ \Delta A\cap\Delta K_{k}=\emptyset. \end{cases}
\end{equation}

\textbf{(1)} Assume that $\Delta A$ intersects each order $p^2$ subgroup non-trivially, then first of all $M$ is non-empty since $\Delta A$ needs to intersect $\Delta K$ non-trivially. Without loss of generality let us assume that $0\in M$ and $\infty\in M'$, then by \eqref{EqSTMMDic} and the assumption that $\Delta A$ has to intersect each $H_{j,\infty}$ non-trivially we get
$$\begin{cases}\Delta A\cap \left(\bigcup_{j\in\mathbb Z_p}E_{j,0}\right)=\emptyset, \\ \Delta A\cap \Delta K_{0}\neq\emptyset, \end{cases}   \text{ and } \begin{cases}\Delta A\cap E_{j,\infty}\neq\emptyset & \forall j\in\mathbb Z_p, \\ \Delta A\cap\Delta K_{\infty}=\emptyset. \end{cases}$$
By Lemma \ref{LmGen} this simply implies that
$$\Delta(C_{0,\infty}\oplus K_0)\subseteq \left(\bigcup_{j\in\mathbb Z_p}E_{j,\infty}\right)\cup\Delta K_0\subseteq \Z{S},$$
then it is easy to verify that
$$\Delta(\G{p^2})\subseteq Z(\F{S}\cdot \F{C_{0,0}\oplus K_{\infty}}),$$
which by Lemma \ref{LmUncertainty} \ref{U2} indicates that $|S|\ge p^2>|A|$, thus a contradiction.

\textbf{(2)} If there is some order $p^2$ subgroup, denoted by $B$, so that $\Delta A\cap\Delta B=\emptyset$, then $A\oplus B$ is well defined by \eqref{EqDiff}, and thus $|A+B|=|A\oplus B|=|A|\cdot |B|$.

\textbf{(2.1)} If there is some $E_{j,k}$ (resp. $\Delta K_k$) such that
$$\Delta(A\oplus B)\cap E_{j,k}=\emptyset, \quad (\text{resp. } \Delta(A\oplus B)\cap\Delta K_k=\emptyset), $$
then $B\oplus C_{j,k}$ (resp. $B\oplus K_k$) is a tiling complement of $A$. 
 
\textbf{(2.2)} If no $E_{j,k}$ or $\Delta K_k$ intersects $\Delta(A\oplus B)$ trivially, then there are two subcases:

\textbf{(2.2.1)} If $B=K$, i.e., $\Delta A\cap\Delta K=\emptyset$, then for each $k\in\{0,1,\ldots,p-1,\infty\}$, there shall be some $j_k\in\mathbb Z_p$ such that 
\begin{equation} \label{Eq131}
\Delta A\cap  E_{j_k,k}\neq\emptyset,
\end{equation}
otherwise we will be back in Case (2.1). Indeed, if $k^*$ is such an index that $\Delta A\cap\left(\bigcup\limits_{j\in\mathbb Z_p}E_{j,k^*}\right)=\emptyset$, then 
$$\Delta A\cap K_{k^*}^{\perp_s}=\Delta A\cap\left(K\bigcup\left(\bigcup\limits_{j\in\mathbb Z_p}E_{j,k^*}\right)\right)=\emptyset,$$
which makes $(A,K_{k^*}^{\perp_s})$ a tiling pair.  

\eqref{Eq131} together with \eqref{EqAnn} and Lemma \ref{LmGen} indicate that $ E_{j_k,k}\subseteq \Z{S}$, hence by Lemma \ref{LmCount} we get
\begin{equation} \label{EqCountP1}
|S\cap K_k^{\perp_s}|=|S\cap \bra{ph_{j_k,k}}^{\perp_s}|=p|S\cap \bra{h_{j_k,k}}^{\perp_s}|=p|S\cap H_{j_k,k}|.
\end{equation}
\eqref{EqCountP1} holds for every $k\in\{0,1,\ldots,p-1,\infty\}$, adding these $p+1$ number of equations together, then for the left-hand side we have
\begin{equation} \label{EqCountP2}
\sum_{k\in\{0,1,\ldots,p-1,\infty\}}|S\cap K_k^{\perp_s}|=\sum_{k\in\{0,1,\ldots,p-1,\infty\}}(|S\cap K|+|S\cap (K_k^{\perp_s}\setminus K)|)=|S|+p|S\cap K|,
\end{equation}
and for the right-hand side (of adding $p+1$ number of \eqref{EqCountP1} together) we have
\begin{equation} \label{EqCountP3}
p\sum_{k\in\{0,1,\ldots,p-1,\infty\}}|S\cap H_{j_k,k}|=p^2+p|S\cap K|+p\sum_{k\in\{0,1,\ldots,p-1,\infty\}}|S\cap E_{j_k,k}|.
\end{equation}
Indeed, because $S$ is assumed to contain the identity, we have that
$$|S\cap H_{j_k,k}|=|\{(0,0)\}|+|S\cap\Delta K_{k}|+|S\cap E_{j_k,k}|,$$
holds for each $k\in\{0,1,\ldots,p-1,\infty\}$, recall also
$$|S\cap K|=|\{(0,0)\}|+\sum_{k\in\{0,1,\ldots,p-1,\infty\}}|S\cap\Delta K_{k}|,$$
together we get
$$\sum_{k\in\{0,1,\ldots,p-1,\infty\}}|S\cap H_{j_k,k}|=p\underbrace{|\{(0,0)\}|}_{=1}+|S\cap K|+\sum_{k\in\{0,1,\ldots,p-1,\infty\}}|S\cap E_{j_k,k}|.$$
Multiplying $p$ at both sides gives \eqref{EqCountP3}. Now combining \eqref{EqCountP1}, \eqref{EqCountP2}, \eqref{EqCountP3} we obtain 
$$|S|=p^2+p\sum_{k\in\{0,1,\ldots,p-1,\infty\}}|S\cap E_{j_k,k}|\ge p^2,$$
which contradicts the assumption that $|S|=|A|<p^2$.

\textbf{(2.2.2)} If $B$ is a maximal cyclic subgroup, then there should also be at least one $k$ such that $\Delta A\cap\Delta K_k\neq\emptyset$, otherwise $\Delta A\cap\Delta K=\emptyset$ and we will be back in Case (2.2.1). Without loss of generality (by using a symplectic transformation) let us assume that $B=H_{0,\infty}$ and $k=0$, i.e., 
$$\Delta A\cap \Delta H_{0,\infty}=\emptyset, \quad\text{and}\quad \Delta A\cap\Delta K_0\neq\emptyset.$$

If $|S|=p$, then we would have $|A|=p$, consequently by Lemma \ref{LmSTp} we can conclude that $A$ is a tile.
 
If $|S|>p$, then there shall be maximal cyclic subgroup $B'$ and some $k'$ such that 
$$\Delta S\cap B'=\emptyset, \quad\text{and}\quad \Delta S\cap\Delta K_{k'}\neq\emptyset,$$
otherwise swapping $A$ and $S$ we are again back in previous cases and can assert that $S$ is a tile so that $|S|=p$, which is a contradiction.

We shall also notice that these assumptions necessarily imply $|A\cap K|<p$ and $|S\cap K|<p$. Indeed, otherwise suppose $s\neq(0,0)$ is an element of $S\cap K$, then $s\in\Delta S$ holds since $S$ contains the identity. Thus by spectrality we have $s\in\Delta S\subseteq \Z{A}$. On the other hand, we also have $K\subseteq\bra{s}^{\perp_s}$ and $ps=0$, therefore Lemma \ref{LmCount} shows
$$p^2>|A|=|A\cap\bra{ps}^{\perp_s}|=p|A\cap\bra{s}^{\perp_s}|\ge p|A\cap K|,$$
which indicates $|A\cap K|<p$. The same applies to $|S\cap K|$. Also $|A\cap K|\ge1$ and $|S\cap K|\ge1$ hold since they both contain the identity element. Moreover, since $0\in M$, \eqref{EqSTMMDic} indicates that 
$$A\cap \left(\bigcup_{j\in\mathbb Z_p}E_{j,0}\right)=\emptyset,$$
then we set
$$a_k=|A\cap \left(\bigcup_{j\in\mathbb Z_p}E_{j,k}\right)|, \quad b=|A\cap K|,$$
and
$$a_k'=|S\cap \left(\bigcup_{j\in\mathbb Z_p}E_{j,k}\right)|, \quad b'=|S\cap K|.$$
As argued there is
\begin{equation} \label{EqB}
a_0=0, \quad 1\le b,b'< p,
\end{equation}
and by \eqref{EqAnn} and Lemma \ref{LmCount} we must also have
$$a_k=0\Leftrightarrow a_k'=0, \quad a_k\neq 0\Leftrightarrow a_k'\neq 0.$$
Enumerate non-zero entries in $\{a_0,a_1,\ldots, a_{p-1}, a_{\infty}\}$ as $a_{t_1},\ldots, a_{t_r}$, then 
$$r\le p,$$ 
since $a_0=0$. Also By Lemma \ref{LmCount} we have that each of $a_{t_1}+b,\ldots, a_{t_r}+b$ is divisible by $p$, while by Lemma \ref{LmBasic} the quantity
$$|A|=a_{t_1}+\ldots+a_{t_r}+b,$$
is also divisible by $p$, consequently 
$$(a_{t_1}+b)+\ldots+(a_{t_r}+b)-|A|=(r-1)b,$$ 
shall be divisible by $p$, but $1\le b<p$, and $r-1$ is also strictly less than $p$, therefore the only possibility for it to hold is 
$$r=1,$$
i.e., exactly one of $\bigcup\limits_{j\in\mathbb Z_p}E_{j,0}, \ldots, \bigcup\limits_{j\in\mathbb Z_p}E_{j,\infty}$ intersects $A$ and $S$ non-trivially. Let it be $\bigcup\limits_{j\in\mathbb Z_p}E_{j,k^*}$, then
\begin{equation} \label{EqSTSinK}
A,S\subseteq K_{k^*}^{\perp_s},
\end{equation}
and there must exist some $j,j'\in\mathbb Z_p$ such that $A\cap E_{j,k^*}\neq\emptyset$ and $S\cap  E_{j',k^*}\neq\emptyset$ (because both $A$ and $S$ intersect $\bigcup\limits_{j\in\mathbb Z_p}E_{j,k^*}$ non-trivially), consequently by \eqref{EqSTMMDic} we must have
\begin{equation} \label{EqB2}
\Delta A\cap\Delta K_{k^*}=\Delta S\cap\Delta K_{k^*}=\emptyset.
\end{equation}
If actually
$$\Delta A\cap  E_{j,k^*}\neq\emptyset,$$
would hold for all $j\in\mathbb Z_p$, then for each $j\in\mathbb Z_p$ let
$$c_j'=|S\cap  E_{j,k^*}|,$$
and apply Lemma \ref{LmCount} on elements from $\Delta A\cap  E_{j,k^*}$ for each $j\in\mathbb Z_p$, together with \eqref{EqB2} we get
$$|S\cap K_{k^*}^{\perp_s}|=p|S\cap H_{j,k^*}|=p(1+c_j').$$
Adding these p numbers of equations together we obtain
\begin{equation} \label{EqCountS1}
p|S\cap K_{k^*}^{\perp_s}|=p^2+p\sum_{j\in\mathbb Z_p}c_j'.
\end{equation}
On the other hand, recall from \eqref{EqSTSinK} that $S$ is completely contained in $K_{k^*}^{\perp_s}$, which means
\begin{equation} \label{EqCountS2}
|S|=|S\cap K_{k^*}^{\perp_s}|=|S\cap K|+\sum_{j\in\mathbb Z_p}c_j'.
\end{equation}
Combining \eqref{EqCountS1} and \eqref{EqCountS2} produces $b'=|S\cap K|=p$, which contradicts \eqref{EqB}. Hence there must be some $j^*\in\mathbb Z_p$ such that 
$$\Delta A\cap  E_{j^*,k^*}=\emptyset,$$
which combined with \eqref{EqSTSinK} and \eqref{EqB2} implies that $(A,H_{j^*,k^*})$ is a tiling pair in the subgroup $K_{k^*}^{\perp_s}$, which further implies that $A$ is also a tiling set in $\G{p^2}$.
\end{itemize}
We have now exhausted all possibilities and completed the proof.
\end{proof}

A subset $A$ in $\G{n}$ is called \emph{periodic} if $A=H\oplus A'$ for some subgroup $H$. A tiling pair is called periodic if one of the component is periodic. There is a famous result in \cite{sands1957,sands1962} that characterizes periodic factorizations in finite Abelian groups. In the recent paper \cite{fan2024} it was asked whether one can replace a component in a non-periodic tiling pair with a periodic one, such a property is formulated as the weak periodic tiling property therein. For $\G{p^2}$ we can prove the following:

\begin{Cor}
Let $(A,B)$ be a tiling pair in $\G{p^2}$, if neither of $A,B$ is periodic, then one of them can actually be replaced by a periodic set, i.e, either there is some periodic $A'$ so that $(A',B)$ is still a tiling pair, or there is some periodic $B'$ so that $(A,B')$ is still a tiling pair.
\end{Cor}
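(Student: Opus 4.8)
The plan is to run through the same case analysis that underlies the proof of Theorem \ref{ThmMain}, but now keeping track of which explicit tiling complement gets constructed in each case and observing that the one produced is almost always already periodic. Recall that the sizes to worry about are $|A|\in\{p,p^2,p^3\}$, since $|A|\in\{1,p^4\}$ is trivial (the complement is $\mathbb Z_{p^2}\times\mathbb Z_{p^2}$ or $\{(0,0)\}$, the whole group being periodic and a point being... well, in the trivial cases one of $A,B$ is a subgroup, contradicting the hypothesis that neither is periodic, so these cases do not even occur). Likewise if $|A|=p$ then, by Lemma \ref{LmTSp} and the spectral-to-tiling direction, $A$ has a tiling complement of the form $S$ with $\Delta A$ disjoint from a full rotated-Lagrangian structure; more to the point, $|B|=p^3$ forces $B$, a tile of size $p^3$, to have structure, and one checks directly (using \eqref{EqD}-type arguments as in Lemma \ref{LmTSq}) that a size-$p$ tile which is non-periodic still admits a periodic complement $B'=\mathbb Z_{p^2}\times D$. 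So the symmetric pair of cases $|A|\in\{p,p^3\}$ is handled by citing the explicit complement built in the proof of Lemma \ref{LmTSq}, which is manifestly periodic (it is $\mathbb Z_{p^2}\times D$, a union of cosets of $\mathbb Z_{p^2}\times\{0\}$).

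The real content is $|A|=|B|=p^2$. Here I would re-examine the three-way split in the tiling-to-spectral part of the proof of Theorem \ref{ThmMain}. If $\Delta K\subseteq Z(\widehat{\mathbf 1}_A^{sym})$ then $(A,K)$ is a tiling pair already and $K$ is a subgroup, so $B$ can be replaced by the periodic set $K$ — in fact then $A$ is a tiling complement of a subgroup, hence (again by \eqref{EqDecomp}) we may even swap. If $\Delta K\cap Z(\widehat{\mathbf 1}_A^{sym})=\emptyset$, then $\Delta K\subseteq Z(\widehat{\mathbf 1}_B^{sym})$, so $K$ complements $B$; thus $(B,K)$ is a tiling pair and again $A$ can be replaced by the periodic set $K$. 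The remaining case is $\Delta K\cap Z(\widehat{\mathbf 1}_A^{sym})\neq\emptyset$ but $\Delta K\not\subseteq Z(\widehat{\mathbf 1}_A^{sym})$; here the proof of Theorem \ref{ThmMain} shows (after ruling out the impossible subcase via Lemma \ref{LmPL} and the uncertainty principle) that either $A$ or $B$ annihilates, in the symplectic-Fourier sense, all of $\Delta H_{j,m}$ for some cyclic Lagrangian $H_{j,m}$, or it annihilates $\bigcup_j E_{j,m'}$ (the Lemma \ref{LmP2}\ref{P2TS} subcase). In the first situation $H_{j,m}$ itself is a subgroup complementing $A$, so $B$ is replaced by the periodic (indeed subgroup) set $H_{j,m}$. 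In the Lemma \ref{LmP2} subcase the complement exhibited is $K_0\oplus C_{0,\infty}$, which is $C_{0,\infty}$ translated over the subgroup $K_0$ — this is periodic by definition. So in every subcase of $|A|=p^2$ one of the two components can be swapped for something of the form $H\oplus C$ with $H$ a nontrivial subgroup.

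The main obstacle — really the only place requiring care — is making sure the case split is genuinely exhaustive and that in each terminal node the complement constructed in the proof of Theorem \ref{ThmMain} is of periodic shape; in particular one must double-check the $|A|=p^3$ / $|A|=p$ pair, where the proof of Lemma \ref{LmTSq} builds the complement $\mathbb Z_{p^2}\times\{0\}$ together with $\{0\}\times D$, i.e.\ $(\mathbb Z_{p^2}\times\{0\})\oplus(\{0\}\times D)$, which is periodic with respect to $H=\mathbb Z_{p^2}\times\{0\}$. One should also note that the degenerate possibility "both $A$ and $B$ are already subgroups" is excluded by hypothesis, so we never need the empty construction. Assembling these observations: in every case at least one of $A,B$ can be replaced by a set of the form $H\oplus C$ with $H$ a nontrivial subgroup, which is exactly a periodic set; this proves the corollary. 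The write-up is then just a matter of quoting, node by node, the complement already named in the proof of Theorem \ref{ThmMain} and remarking on its shape, with no new computation needed.
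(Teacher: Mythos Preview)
Your proposal has a recurring gap: you conflate symplectic spectra with tiling complements. In the $|A|\in\{p,p^3\}$ case you cite the set $S=\mathbb Z_{p^2}\times D$ from Lemma \ref{LmTSq}, but that lemma constructs a \emph{spectrum} of a size-$p^3$ set, not a tiling complement; indeed $|S|=p^3$, so it cannot tile together with $A$ when $|A|=p^3$ (the sizes would multiply to $p^6$), nor does that lemma say anything about a size-$p$ tile. The same confusion reappears in your $|A|=p^2$ analysis, where you follow the tiling-to-spectral branch of Theorem \ref{ThmMain}: in the subcase handled by Lemma \ref{LmP2}\ref{P2TS} you exhibit $K_0\oplus C_{0,\infty}$, but that lemma only asserts $\Delta(K_0\oplus C_{0,\infty})\subseteq Z(\widehat{\mathbf 1}_A^{sym})$, i.e.\ it is a \emph{spectrum}; for it to be a tiling complement you would need $\Delta A\cap\Delta(K_0\oplus C_{0,\infty})=\emptyset$, and nothing in that branch supplies this (in general $\Delta A\cap Z(\widehat{\mathbf 1}_A^{sym})$ can be nonempty). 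The subcase where $\Delta H_{j,m}\subseteq Z(\widehat{\mathbf 1}_A^{sym})$ happens to work because $H_{j,m}$ is a Lagrangian and Lemma \ref{LmZ}\ref{Z2} converts the zero-set condition into a tiling statement, but that mechanism is unavailable for non-subgroup spectra like $K_0\oplus C_{0,\infty}$.

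The paper avoids this trap by working with difference sets rather than zero sets, and by citing the \emph{spectral-to-tiling} direction of Theorem \ref{ThmMain} (where tiling complements are actually built) instead of the tiling-to-spectral direction. For $|A|=p^3,|B|=p$ it argues: either some $K_k$ has $\Delta A\cap\Delta K_k=\emptyset$ (so $B'=K_k$), or Lemma \ref{LmDiff} forces $\Delta B\cap\Delta K=\emptyset$, and then the spectral-to-tiling analysis of $B$ (Case (2.2.1)) yields $A'=K_{k^*}^{\perp_s}$. For $|A|=|B|=p^2$ it again uses Lemma \ref{LmDiff} on $\Delta A$ to push constraints onto $\Delta B$, and invokes Lemma \ref{LmP2}\ref{P2ST} (the tiling half), not \ref{P2TS}. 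To salvage your route you would need, at each terminal node, to pass from ``$X\subseteq Z(\widehat{\mathbf 1}_A^{sym})$'' to ``$X\cap\Delta A=\emptyset$'' or ``$X\cap\Delta B=\emptyset$''; Lemma \ref{LmDiff} is precisely the bridge for that, and it is absent from your outline.
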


\begin{proof}
Without loss of generality let us just assume both $A,B$ contain the identity and $|A|\ge |B|$, then there are two cases:

\textbf{(1)} $|A|=p^3$ and $|B|=p$.

If there is some $K_k$ such that $(A,K_k)$ is a tiling pair, then we may take $B'=K_k$.

Alternatively if there is no such $K_k$, then $\Delta A\cap  \Delta K_k\neq\emptyset$ for all $k\in\{0,1,\ldots,p-1,\infty\}$, and by Lemma \ref{LmDiff} this implies $\Delta B\cap \Delta K=\emptyset$. Recall by Theorem \ref{ThmMain} and Lemma \ref{LmSEquiv} we can also view $B$ as a symplectic spectral set, then this is Case (2.2.1) in the proof of Theorem \ref{ThmMain}, the spectral to tiling, $p^2>|B|\ge p$ part. It follows from the derivations there that we can take $A'=K_k^{\perp_s}$ for some $k\in\{0,1,\ldots,p-1,\infty\}$.

\textbf{(2)} $|A|=|B|=p^2$.

If $\Z{A}\cap  \Delta K_k\neq\emptyset$ for all $k\in\{0,1,\ldots,p-1,\infty\}$, then by Lemma \ref{LmZ} \ref{Z2} we can take $B'=K$.

Similarly if there is some $K_{k}$ and $E_{j,k}$ such that $\Z{A}\cap \Delta K_{k}\neq\emptyset$ and $\Z{A}\cap E_{j,k}\neq\emptyset$, then by Lemma \ref{LmZ} \ref{Z2} we can take $B'=H_{j,k}$.

Alternatively if there is some $K_{k}$ and $E_{j,k}$ such that $\Z{A}\cap\Delta K_k=\Z{A}\cap E_{j,k}=\emptyset$, then by \eqref{EqDecomp} we must have $\Delta H_{j,k}\subseteq \Z{B}$, thus by Lemma \ref{LmZ} \ref{Z2} we can take $A'=H_{j,k}$.

Finally we are again in the situation that we can partition the set $\{0,1,\ldots,p-1,\infty\}$ into $M$ and $M'$, so that
$$\tilde M=\{m: \Z{A}\cap\Delta K_m\neq \emptyset\}, \quad \tilde M'=\{m': \Z{A}\cap\Delta K_{m'}=\emptyset\}.$$
They are both non-empty, and for each $m\in\tilde M$, each $m'\in\tilde M'$ and each $j,j'\in\mathbb Z_p$ we have
$$\Z{A}\cap E_{j, m}=\emptyset, \quad \Z{A}\cap E_{j',m'}\neq\emptyset.$$
Again Theorem \ref{ThmMain} and Lemma \ref{LmSEquiv} shows that $A$ is a symplectic spectral set, then this is just Case (2.1) in the proof of Theorem \ref{ThmMain}, the spectral to tiling, $p^3>|A|\ge p^2$ part. As argued there a periodic $B'$ can be constructed using Lemma \ref{LmP2} \ref{P2ST}.
\end{proof}

{\small\noindent 
\textbf{Acknowledgement:} The author would like to thank Aihua Fan(CCNU/WHU), Shilie Fan(CCNU), Lingmin Liao(WHU), Ruxi Shi (FDU), Tao Zhang(XDU) for a fruitful discussion and many helpful comments on the tiling to spectral part of this paper. There have been many efforts on improving this paper, the author thanks all the anonymous reviewers for spending considerable time on this paper and providing valuable inputs.}

{\small
}

\end{document}